\definecolor{rouge}{rgb}{0.7,0,0}
\definecolor{bleu}{rgb}{0,0,0.7}
\long\def\nnfoottext#1{\insert\footins{\footnotesize
    \interlinepenalty\interfootnotelinepenalty
    \splittopskip\footnotesep
    \splitmaxdepth \dp\strutbox \floatingpenalty \@MM
    \hsize\columnwidth \@parboxrestore
   \edef\@thefnmark{}
   \edef\@currentlabel{}\@makefntext
    {\rule{\z@}{\footnotesep}\ignorespaces
      #1\strut}}}
\numberwithin{equation}{section}
\newtheorem{thm}{Theorem}[section]
\newtheorem{prop}[thm]{Proposition}
\newtheorem{lm}[thm]{Lemma}
\newtheorem{cor}[thm]{Corollary}
\theoremstyle{definition}
\newtheorem{defi}[thm]{Definition}
\newtheorem{ex}[thm]{Example}
\newtheorem{Rqs}[thm]{Remarks}
\theoremstyle{remark}
\newtheorem*{mercis}{Acknowledgments}
\theoremstyle{plain}
\def\ad{\operatorname {ad}}
\def\codim{\operatorname {codim}}
\def\Hom{\operatorname {Hom}}
\def\GL{\operatorname {GL}}
\DeclareMathAlphabet{\calptmx}{OMS}{ztmcm}{m}{n}
\newcommand{\K}{{\Bbbk}}
\newcommand{\Z}{\mathbb{Z}}
\newcommand{\C}{\mathbb{C}}
\newcommand{\g}{\mathfrak{g}}
\newcommand{\h}{\mathfrak{h}}
\newcommand{\kk}{\mathfrak{k}}
\newcommand{\pp}{\mathfrak{p}}
\newcommand{\qq}{\mathfrak{q}}
\newcommand{\lf}{\mathfrak{l}}
\newcommand{\mf}{\mathfrak{m}}
\newcommand{\cc}{\mathfrak{c}}
\newcommand{\gl}{\mathfrak{gl}}
\newcommand{\sld}{\mathfrak{sl}_2}
\newcommand{\sln}{\mathfrak{sl}}
\newcommand{\so}{\mathfrak{so}}
\newcommand{\Striplet}{$\mathfrak{sl}_{2}$-triple\xspace}
\newcommand{\Od}{\mathcal{O}}
\newcommand{\sS}{\calptmx{S}}
\newcommand{\Id}{\mathrm{Id}}
\newcommand{\cpl}{\cc_{\pp}(\lf)^{\bullet}}
\newcommand{\cplnb}{\cc_{\pp}(\lf)}
\newcommand{\func}[4]{\left\{\begin{array}{r c l}#1&\rightarrow & #2\\ #3&\mapsto & #4\end{array}\right.}
\newcommand{\upind}{\uparrow \uparrow}
\newcommand{\wupind}{\uparrow}
\def\preisomto{\vbox{\hbox to
               14pt{\hfill$\sim$\hfill}\nointerlineskip\vskip -0.2pt
               \hbox to 14pt{\rightarrowfill}}}
\def\prelongisomto{\vbox{\hbox to
                17pt{\hfill$\sim$\hfill}\nointerlineskip\vskip -0.2pt
                \hbox to 17pt{\rightarrowfill}}}
\newenvironment{myitemize}
{ \begin{itemize}
    \setlength{\itemsep}{0pt}
    \setlength{\parskip}{0pt}
    \setlength{\parsep}{0pt}     }
{ \end{itemize}                  }
\begin{document}

\title{Sheets in symmetric Lie algebras and slice induction}
\author{{\sc Micha\"el~Bulois}
\thanks{{\url{Michael.Bulois@univ-st-etienne.fr}}, Universit\'e de Lyon,
CNRS UMR 5208,
Universit\'e Jean Monnet,
Institut Camille Jordan,
Maison de l'Universit\'e, 10 rue Tr\'efilerie,
CS 82301,
42023 Saint-Etienne Cedex 2,
France} \phantom{aaa}{\sc Pascal Hivert}\thanks{\url{pascal.hivert@math.uvsq.fr}, Universit\'e de Versailles-Saint-Quentin}}
\date{}
\maketitle

\nnfoottext{\!\!\!\!\!AMS Classification 2010: 17B70, 17B25

Keywords: semisimple symmetric Lie algebras, adjoint orbits, sheets, induction of nilpotent orbits, Slodowy slice, G$_2$}

\begin{abstract}
In this paper, we study sheets of symmetric Lie algebras through their Slodowy slices. In particular, we introduce a notion of \emph{slice induction} of nilpotent orbits which coincides with the parabolic induction in the Lie algebra case. We also study in more details the sheets of the non-trivial symmetric Lie algebra of type G$_2$.  We characterize their singular loci and provide a nice desingularization lying in $\mathfrak so_7$.
\end{abstract}

\section{Introduction}
Let $\g$ be a reductive Lie algebra defined over an algebraically closed field of characteristic zero. Assume that $\g$ is $\Z/2\Z$-graded \[\g=\kk\oplus\pp,\] with even part $\kk$ and odd part $\pp$.
We may refer to such a symmetric Lie algebra as the \emph{symmetric pair} $(\g,\kk)$.

The algebraic adjoint group $G$ of $\g$ acts on $\g$ and the closed connected subgroup $K\subset G$ with Lie algebra $\kk$ acts on $\pp$.
A sheet of $\pp$ (resp. $\g$) is an irreducible component of a locally closed set of the form 
\[\pp^{(m)}:=\{x\in\pp\mid \dim K.x=m\}, \qquad (\mbox{resp. }\g^{(m)}:=\{x\in\g\mid \dim G.x=m\}) .\]

\medskip

Sheets of $\g$ have been extensively studied in several papers in the past decades. 

On one hand, the key papers of Borho and Kraft \cite{BK,Bo} describe sheets as disjoint union of so-called \emph{decomposition classes} (also known as \emph{Jordan classes}). The decomposition classes form a finite partition of $\g$, each class being  irreducible, locally closed and of constant orbit dimension. In particular, there is a dense class in each given sheet.
An important notion used in \cite{BK,Bo} is the parabolic induction of orbits introduced in \cite{LS} which gives rise to a notion of induction of decomposition classes. It is shown in \cite{Bo} that a sheet $S$ with dense decomposition class $J$ is precisely the union of the decomposition classes induced from $J$. Two consequences of this are the following. Firstly, each sheet contains a unique nilpotent orbit. Secondly, there is a parameterization of sheets coming with induction \cite[\S5]{BK}. Later, Broer developed methods for 
a deeper study of decomposition classes, their closures, regular closures, quotients and normalizations in \cite{Br1, Br2}.

On the other hand, making use of the previous parameterization, it is shown in \cite{Kat} that sheets are parameterized by their Slodowy slices. More explicitly, let $e$ be a representative of the nilpotent orbit of a sheet $S$ of $\g$ and embed $e$ in an $\sld$-triple $\sS=(e,h,f)$. The Slodowy slice of $S$ (with respect to $e$) is 
$$e+X(S,\sS):=S\cap (e+\g^f),$$
where $\g^f$ stands for the centraliser of $f$ in $\g$.
Katsylo proves that $S=G.(e+X(S,\sS))$ and that a geometric quotient of $S$ can be expressed as a finite quotient of $e+X(S,\sS)$.
In \cite{IH}, Im Hof shows that the morphism $G\times (e+X(S,\sS))\rightarrow S$ is smooth. This relates smoothness of $S$ to smoothness of $e+X(S,\sS)$ and eventually leads to a proof of smoothness of sheets in classical Lie algebras.

To our knowledge, the only known case of a singular sheet lies in a simple Lie algebra of type G$_2$ \cite{Pe, BK}. In this case, the two non-trivial sheets (\emph{i.e.} non-regular and with more than one orbit) are the two irreducible components of the set of subregular elements. One of these subregular sheets $S_2^{\g}$ is smooth while the other $S_1^{\g}$ is singular. More precisely, we can see that three analytical germs of $S_1^{\g}$ intersect in the neighborhood of elements of the subregular nilpotent orbit. In \cite[\S2]{Hi}, an explicit desingularization of $S_1^{\g}$ is constructed in terms of the well-known projection $\so_7\twoheadrightarrow \g$. 
\medskip

We now look at the symmetric case and sheets of $\pp$. Most of the ground results the authors are aware of in this setting are gathered in \cite[\S39.5-6]{TY}. An important feature is that there exists a notion of \emph{$K$-decomposition class} in $\pp$ and these classes share several good properties with the usual decomposition classes of $\g$. In particular, there is still a unique dense $K$-decomposition class in each sheet of $\pp$. In addition, each sheet of $\pp$ contains at least one nilpotent orbit. However the uniqueness statement no longer holds in general.  

One of the obstacles rising in the study of the sheets of $\pp$ is the lack of a well behaved notion of parabolic induction as one can check in the $(\sld,\so_2)$-case. For instance the induction theory developed in \cite{Oh} does not preserve orbit codimension, hence is of little help for our purpose. The main philosophy of \cite{Bu2} consists in noting that, at least in the case $\g=\gl_n$, the Slodowy slices of a sheet $S$ of $\pp$ still seem to encode significant geometric information of $S$. One of the aim of the present work is to justify this assertion in a more systematic manner. For instance, we show in Section \ref{slosli} that several properties of sheets, such as dimension, smoothness and orbits involved, are fully reflected in the corresponding Slodowy slices. We state these results for wider classes of subvarieties of $\pp$ in Proposition \ref{propsmooth}, Theorem \ref{genpart} and Proposition \ref{deformation1}.

Then we introduce in Section \ref{sliceind} the notion of slice induction. It turns out to be precise enough to rebuilt important parts of the theory resulting from parabolic induction in the Lie algebra case. This includes (see Theorem \ref{thmind} and Corollary \ref{corind}):
\begin{myitemize}
\item Construction of one parameter deformations of orbits.
\item Stratification properties for decomposition classes.
\item Characterization of a sheet with dense $K$-decomposition class $J$ as the union of induced classes from $J$.
\end{myitemize}
In the Lie algebra case, this also provides a new insight on these results.

In \cite{Bu2}, the connection between sheets of $\g$ and sheets of $\pp$ was studied.  In particular it was shown that, whenever sheets of $\g$ are smooth, sheets of $\pp$ are also smooth. The last goal of this paper is the description of sheets of $\pp$ in the non-trivial symmetric Lie algebra of type G$_2$. We make use of two approaches for this study. In Section \ref{G2slsl}, we study the subregular sheets through their Slodowy slices. This provides the set-theoretical description of the sheets and describes the behavior of the singularities of $S_1^{\g}$ through the intersection with $\pp$. In Section \ref{G24ality}, we exploit the symmetry of $\g$ making use of $4$-ality as described in \cite{LM}. This allows us to construct a nice desingularization in $\so_7$ of the singular sheet (Proposition \ref{desing}), following the guidelines of \cite{Hi}. 

\medskip

It is plausible that most of what is stated in Section \ref{slosli} and \ref{sliceind} remains true in the more general setting of $\theta$-representation. However, the authors are unaware of references for a general theory of decomposition classes in this setting.

\begin{mercis}
We are grateful to the referees for the fast and careful reading of the paper. In particular, Section \ref{sliceind} has  benefited from their remarks.
\end{mercis}

\section{Geometry of subvarieties and Slodowy slices}
 \label{slosli}
We start with some notation. In the whole paper, $\K$ is an algebraically closed field of characteristic $0$, $\g=\kk\oplus\pp$ is a reductive symmetric Lie algebra over $\K$, that is a $\Z/2\Z$-graded reductive Lie algebra $\g$ with even part $\kk$ and odd part $\pp$. In particular, $\kk$ is a Lie algebra and $\pp$ is a $\kk$-module. We denote by $\g'$ the semisimple part of $\g$. Lie algebras can be seen as particular cases of symmetric Lie algebras in the following sense: given a Lie algebra $\g$, there exists a symmetric Lie algebra $\hat\g=\hat\kk\oplus\hat\pp$ such that the $\hat\kk$-module $\hat\pp$ is isomorphic to the $\g$-module $\g$\footnote{Namely $\hat\g= \g\times \g$, $\hat \kk=\{(x,x)|x\in \g\}$ and $\hat \pp=\{(x,-x)|x\in \g\}$.}. As a consequence, all the statements enounced below in the symmetric setting hold for Lie algebras replacing both $\kk$ and $\pp$ by $\g$, and $K$ by $G$.
A large part of the Lie theory have a symmetric counterpart. We refer to \cite{KR} for the ground results on symmetric Lie algebra. 

Let $G$ be the adjoint group of $\g$ and $K$ be the closed connected subgroup of $G$ with Lie algebra $\kk\cap \g'$. The group $K$ acts on $\pp$.
For $x\in \pp$, it follows from \cite[Proposition~5]{KR} that \begin{equation}\dim K.x=\frac 12 \dim G.x, \qquad \dim \kk-\dim \kk^x=\dim \pp-\dim \pp^x.\label{egalfond} \end{equation}
For $A\subset\g$, we set $A^{\bullet}:=\{a\in A| \,\forall a'\in A, \, \dim G.a\geqslant \dim G.a'\}$. Note that we can replace $G$ by $K$ in the previous definition when $A\subset \pp$ thanks to \eqref{egalfond}. 
For $A,B\subset\g$, the centralizer in $B$ of $A$ is denoted by $\cc_{B}(A):=\{b\in B|\, \forall a\in A, \,  [a,b]=0\}$.

If $\mf$ is a $\Z/2\Z$-subspace of $\g$ we write $\kk_{\mf}:=\kk\cap \mf$, $\pp_{\mf}:=\pp\cap\mf$ and we have \[\mf=\kk_{\mf}\oplus \pp_{\mf}.\]
We say that a Levi subalgebra $\lf\subset\g$ \emph{arises from $\pp$} if there exists a semisimple element $v\in \pp$ such that $\lf=\g^v$ (it corresponds to the notion of subsymmetric pair in \cite{PY}). In this case, $\lf$ and $\lf'$ are reductive and semisimple $\Z/2\Z$-graded Lie subalgebras of $\g$. In addition, we can decompose $\g$ in $\lf$-modules in the following way
\[\g=\lf\oplus\lf^{\perp}\]
where $\lf^{\perp}$ is the orthogonal complement of $\lf$ in $\g'$ with respect to the Killing form. More concretely, if $\lf=\g^v$ we can write $\lf^{\perp}=[\g,v]$.

It is well-known that $\cc_{\g}(\lf)$ is the center of $\lf$.
In particular, $\pp_{\lf}=\cplnb\oplus\pp_{\lf'}$. 
Moreover, we have \cite[38.8.4]{TY}
\begin{eqnarray}&\cc_{\pp}(\lf)=\cc_{\g}(\lf)\cap\pp=\cc_{\pp}(\pp_{\lf}),&\label{cplnobullet}\\
&\cc_{\pp}(\lf)^{\bullet}=\cc_{\g}(\lf)^{\bullet}\cap\pp=\{u\in \cc_{\pp}(\lf)|\, \g^u=\lf\}. &\label{cplbullet}\end{eqnarray}
We denote by $K_{\lf}$ (resp. $K_{\lf'}$) the closed connected subgroup of $K$ with Lie algebra $\kk_{\lf}\cap \g'$ (resp. $\kk_{\lf'}$). Then, $K_{\lf}=(K^v)^{\circ}$ and the $K_{\lf}$-orbits of $\pp_{\lf}$ are precisely the orbits associated to the reductive symmetric Lie algebra $\lf=\kk_{\lf}\oplus\pp_{\lf}$. The same holds for $K_{\lf'}$-orbits (=$K_{\lf}$-orbits) of $\pp_{\lf'}$.
Define \begin{equation}\label{defUl}U_{\lf}:=\{y\in \pp_{\lf}\,|\,\g^y\subset \lf\}.\end{equation}
The next lemma shows that $K$-orbits and $K_{\lf}$-orbits of elements of $U_{\lf}$ are closely related.

\begin{lm}\label{ts}
Let $\lf$ be a Levi subalgebra arising from $\pp$. Then, the following conditions are equivalent for any $y\in \pp_{\lf}$.
\begin{enumerate}[(i)]
\item $y\in U_{\lf}$,
\item $\g^s\subset \lf$, where $s$ is the semisimple component of $y$,
\item $\codim_{\pp}K.y=\codim_{\pp_{\lf}}K_{\lf}.y$,
\item $[\kk,y]=\pp_{\lf^{\perp}}\oplus[\kk_{\lf},y]$.
\end{enumerate}
Moreover, $U_{\lf}$ is an open subset of $\pp_{\lf}$.
\end{lm}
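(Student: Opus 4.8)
The plan is to establish the equivalences through the implications $(ii)\Rightarrow(i)\Rightarrow(iii)$ together with the two reverse implications $(iii)\Rightarrow(i)$ and $(i)\Rightarrow(ii)$, then to obtain $(iii)\Leftrightarrow(iv)$ by a dimension count, and finally to treat openness on its own. Write $y=s+n$ for the Jordan decomposition of $y$ inside the reductive graded algebra $\lf$, so that $s,n\in\pp_\lf$ and $[s,n]=0$. Since $\lf=\g^v$ with $v\in\pp$ semisimple, $v$ lies in the center of $\lf$, whence $[v,y]=[v,s]=[v,n]=0$. I also record that, $y$ being odd, $\g^y$ is a $\Z/2\Z$-subspace of $\g$, so $\g^y=\cc_\kk(y)\oplus\cc_\pp(y)$. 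The implications $(ii)\Rightarrow(i)$ and $(i)\Rightarrow(iii)$ are then immediate, from $\g^y\subseteq\g^s$ and $\cc_\pp(y)=\pp\cap\g^y$ respectively.

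The first substantive point is $(iii)\Rightarrow(i)$, which I would deduce from the semisimplicity of $\ad v$. Assume $\cc_\pp(y)\subseteq\lf$ and take $k\in\cc_\kk(y)$. Then $[v,k]\in\pp$, and the Jacobi identity together with $[v,y]=0$ and $[k,y]=0$ yields $[y,[v,k]]=0$; hence $[v,k]\in\cc_\pp(y)\subseteq\lf=\g^v$, i.e. $(\ad v)^2k=0$. As $\ad v$ is semisimple this forces $[v,k]=0$, so $k\in\lf$. Therefore $\cc_\kk(y)\subseteq\lf$, and with $\cc_\pp(y)\subseteq\lf$ and the decomposition $\g^y=\cc_\kk(y)\oplus\cc_\pp(y)$ we get $\g^y\subseteq\lf$, which is $(i)$.

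For $(i)\Rightarrow(ii)$ I would use the $\ad v$-eigenspace decomposition $\g^s=\bigoplus_\alpha(\g^s)_\alpha$, in which $(\g^s)_0=\g^s\cap\lf$ while $(\g^s)_\alpha\subseteq\lf^\perp$ for $\alpha\neq0$. If $\g^s\not\subseteq\lf$ then some $(\g^s)_\alpha$ with $\alpha\neq0$ is nonzero; since $[v,n]=0$, the nilpotent operator $\ad n$ preserves this eigenspace and so has a nonzero kernel there, producing a nonzero element of $\g^s\cap\g^n=\g^y$ contained in $\lf^\perp$, which contradicts $(i)$. This closes the equivalence of $(i)$, $(ii)$ and $(iii)$. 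To get $(iii)\Leftrightarrow(iv)$ I would start from the inclusion $[\kk,y]\subseteq[\kk_\lf,y]\oplus\pp_{\lf^\perp}$, valid because $\kk=\kk_\lf\oplus\kk_{\lf^\perp}$ with $[\kk_\lf,y]\subseteq\pp_\lf$ and $[\kk_{\lf^\perp},y]\subseteq\pp_{\lf^\perp}$ (the latter as $\lf^\perp$ is an $\lf$-module). Since $[\kk_\lf,y]\subseteq[\kk,y]$ automatically, $(iv)$ amounts to $\pp_{\lf^\perp}\subseteq[\kk,y]$, hence to $\dim[\kk,y]=\dim[\kk_\lf,y]+\dim\pp_{\lf^\perp}$; rewriting the orbit dimensions via \eqref{egalfond}, applied both to $(\g,\kk)$ and to the subpair $(\lf,\kk_\lf)$, this collapses after cancellation to $\dim\cc_\pp(y)=\dim\cc_{\pp_\lf}(y)$, i.e. to $(iii)$.

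For the final assertion I would note that, $y$ lying in $\lf$, the endomorphism $\ad y$ preserves $\lf^\perp$; since $\g=\lf\oplus\lf^\perp$ one has $\g^y\subseteq\lf$ exactly when $\g^y\cap\lf^\perp=\ker(\ad y|_{\lf^\perp})=0$, that is, when $\ad y|_{\lf^\perp}$ is invertible. Thus $U_\lf=\{y\in\pp_\lf\mid\det(\ad y|_{\lf^\perp})\neq0\}$ is cut out in $\pp_\lf$ by the non-vanishing of a polynomial in $y$, hence is open (and nonempty, as it contains $v$ by \eqref{cplbullet}). The main obstacle is the passage from the centralizer in $\pp$ to the full centralizer, $(iii)\Rightarrow(i)$, and to the centralizer of the semisimple part, $(i)\Rightarrow(ii)$: both rest on $\ad v$ being semisimple, the first through $\ker(\ad v)^2=\ker\ad v$ and the second through the eigenspace grading of $\g^s$ it induces.
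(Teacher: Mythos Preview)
Your proof is correct. The organization differs from the paper's in one interesting respect: to link (i) with (iii) and (iv), the paper first passes through the ungraded analogue (iv$'$): $[\g,y]=\lf^\perp\oplus[\lf,y]$, which is immediately equivalent to (i) via $\g^y\cap\lf^\perp=\{0\}$, and then uses the dimension formula \eqref{egalfond} to descend from (iv$'$) to (iv) and thence to (iii). You instead prove (iii)$\Rightarrow$(i) directly by the observation that if $\cc_\pp(y)\subseteq\lf$ and $k\in\cc_\kk(y)$, then $[v,k]\in\cc_\pp(y)\subseteq\g^v$, so $(\ad v)^2k=0$ and semisimplicity of $\ad v$ forces $k\in\lf$. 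This is a genuinely graded argument that exploits the parity swap $\kk\to\pp$ under $\ad v$ and avoids the detour through the ambient $\g$; it gives a cleaner logical flow, while the paper's route makes the role of the ungraded picture (and the uniform use of \eqref{egalfond}) more transparent. Your treatments of (i)$\Leftrightarrow$(ii), of (iii)$\Leftrightarrow$(iv), and of the openness of $U_\lf$ are essentially the same as the paper's.
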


\begin{proof}
We have $y\in \lf$ . Hence $[\g,y]=[\lf^{\perp},y]\oplus[\lf,y]\subset \lf^{\perp}\oplus [\lf,y]$, with equality if and only if $\g^{y}\cap \lf^{\perp}=\{0\}$ if and only if $\g^{y}\subset \lf$. As a consequence, (i) is equivalent to (iv'): $[\g,y]=\lf^{\perp}\oplus [\lf,y]$. By the way, we also note that $\g^{y}\cap \lf^{\perp}=\{0\}$ is an open condition on $y$.

On the other hand, we see that (iv) is just (iv') intersected with $\pp$. Let $v\in \pp$ such that $\lf=\g^v$. With the help of \eqref{egalfond}, we have \begin{eqnarray*}\dim (\pp_{\lf^{\perp}}\oplus[\kk_{\lf},y])-\dim [\kk,y]&=&\dim[\kk,v]+\dim [\kk_{\lf},y]-\dim [\kk,y]\\
&=&\frac 12(\dim[\g,v]+\dim [\lf,y]-\dim [\g,y])\\
&=& \frac12(\dim (\lf^{\perp}\oplus[\lf,y])-\dim [\g,y])\end{eqnarray*} Since we have the inclusion $[\kk,y]\subset\pp_{\lf^{\perp}}\oplus[\kk_{\lf},y]$, we get the equivalence between (iv') and (iv). Through tangeant spaces, we also see that (iii) is equivalent to (iv).
%
%
%
%
%

There remains to show that (i) is equivalent to (ii). Denote the nilpotent part of $y$ by $n$. We have $\g^y= \g^s\cap \g^n$ so (ii) implies (i). Let us now assume that $\g^s$ is not included in $\lf$. Since $s,n\in \lf$, the endomorphism $\ad n$ stabilizes the non-trivial subspace $\g^s\cap \lf^{\perp}$. Since $\ad n$ is nilpotent, there exists a non-zero element in $\g^n\cap\g^s\cap \lf^{\perp}$. Hence, $\g^{y}\not \subset \lf$. By contraposition, (i) implies (ii) .
%
%
%
%
%
%
\end{proof}


\begin{defi}\label{defXl}Given a Levi subalgebra $\lf$ of $\g$ arising from $\pp$, a nilpotent element $e\in\pp_{\lf}$ embedded in a normal $\sld$-triple $\sS:=(e,h,f)\subset \lf$ (here \emph{normal} means $e,f\in\pp$, $h\in\kk$ and we allow $(0,0,0)$ as $\sld$-triple) and a subset $J\subset\pp$, we define $X_{\lf}(J,\sS)\subset \pp_{\lf}^f$ via 
\[e+X_{\lf}(J,\sS):=(e+\pp_{\lf}^f)\cap U_{\lf}\cap J,\]
where $U_{\lf}$ is as in \eqref{defUl}. 
We say that $e+X_{\lf}(J,\sS)$  is the \emph{Slodowy slice} of $J$ with respect to $(\lf,\sS)$.
\end{defi}
This is a generalization of the notion appearing in \cite{Sl, Kat, Bu2}. In fact, in the case $\lf=\g$, we have $U_{\lf}=\g$ and \[e+X(J,\sS):=e+X_{\g}(J,\sS)=(e+\pp^f)\cap J\] is the natural analogue in the symmetric setting of the ordinary Slodowy slice in Lie algebras, see \cite{Bu2}. 

In what follows, a cone of $\pp$ means a subset of $\pp$ stable under multiplication by $\K^*$.
The next lemma is well-known (see \emph{e.g.} \cite[\S4]{Kat}). 
\begin{lm}\label{lmelimX} Let $J$ be a $K$-stable cone of $\pp$ and $\sS=(e,h,f)$ a normal $\sld$-triple such that $X(J,\sS)\neq\emptyset$.\\ Then $e$ belongs to each irreducible component of $e+\overline{X(J,\sS)}$.
In particular, $e\in \overline{J}$.
\end{lm}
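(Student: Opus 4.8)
The plan is to use the contracting $\K^*$-action attached to the $\sld$-triple $\sS$ and to exploit that $J$ is a $K$-stable cone. First I would introduce the $\ad h$-grading $\g=\bigoplus_{i\in\Z}\g_i$ with $\g_i=\{x\mid[h,x]=ix\}$. Since $h\in\kk\cap\g'$, this grading refines the $\Z/2\Z$-grading (so $\g_i=(\kk\cap\g_i)\oplus(\pp\cap\g_i)$), one has $e\in\g_2$ and $f\in\g_{-2}$, and by $\sld$-representation theory $\g^f=\bigoplus_{i\leqslant0}(\g^f\cap\g_i)$; in particular $\pp^f\subset\bigoplus_{i\leqslant0}\g_i$.

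Next I would bring in the associated cocharacter $\lambda\colon\K^*\to G$ of $h$, normalized so that $\Ad(\lambda(t))$ acts by $t^i$ on $\g_i$; as $h\in\kk\cap\g'$ and $K$ is the connected subgroup with Lie algebra $\kk\cap\g'$, the cocharacter $\lambda$ factors through $K$. Define the Kazhdan action $\gamma(t)\cdot x:=t^{-2}\Ad(\lambda(t))\,x$, an algebraic $\K^*$-action on $\pp$ acting by $t^{\,i-2}$ on $\pp\cap\g_i$. Then $\gamma(t)$ fixes $e$, and it contracts the slice $e+\pp^f$ onto $e$ as $t\to\infty$: for $y\in\pp^f$, writing $s=t^{-1}$ turns $t\mapsto\gamma(t)(e+y)$ into a morphism $\K\to\pp$ involving only nonnegative powers of $s$ (the exponents being $2-i\geqslant2$) whose value at $s=0$ is exactly $e$.

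I would then observe that $\gamma$ preserves the slice data: $\gamma(t)$ stabilizes $J$ because $\lambda(t)\in K$ and $J$ is $K$-stable, while $J$, being a cone, is stable under the scalar $t^{-2}$; and $\gamma(t)$ stabilizes $e+\pp^f$ since it fixes $e$ and preserves $\pp^f$. Hence $\gamma$ preserves $e+X(J,\sS)=(e+\pp^f)\cap J$ and its closure $W:=e+\overline{X(J,\sS)}$. Because $\K^*$ is connected, the induced permutation of the finitely many irreducible components of $W$ is trivial, so each component $C$ is $\gamma$-stable. For any $e+x\in C$ the whole orbit $\gamma(\K^*)(e+x)$ lies in $C$, and its limit $e$ as $t\to\infty$ lies in the closed set $C$; thus $e$ belongs to every irreducible component of $W$. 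Finally $e\in\overline J$ since $W\subset\overline J$.

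The hard part will be the algebro-geometric bookkeeping of the contracting action rather than any deep idea: checking that the associated cocharacter genuinely lands in $K$, that $\pp^f$ occupies only nonpositive $\ad h$-weights so that the orbit map extends across $t=\infty$ with limit exactly $e$, and that connectedness of $\K^*$ forces every irreducible component of $W$ to be stable. Once these points are secured the statement follows at once.
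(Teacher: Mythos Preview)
Your proof is correct and follows essentially the same approach as the paper: both use the contracting $\K^*$-action $t\mapsto t^{-2}\Ad(\lambda(t))$ (the paper's $F_t$ is your $\gamma(t^{-1})$), check that it lands in $K\times\K^*\Id$ so as to preserve the $K$-stable cone $J$ and the slice $e+\pp^f$, and then use connectedness of $\K^*$ to conclude that each irreducible component is stable and contains the limit point $e$. Your write-up is in fact slightly more explicit about the cocharacter landing in $K$ and about the component-stability step.
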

\begin{proof}
A standard proof of this lemma is based on the construction of a one-parameter subgroup of $K\times \K^*\Id$ 
which contracts $e+X(J,\sS)$ to $e$. Define the characteristic grading $\g:=\bigoplus_{i\in\Z} \g(i,h)$ by $\g(i,h):=\{x\in \g\mid [h,x]=ix\}$. For ${t\in\K^*}$, let $F_t\in \GL(\g)$ be such that $$(F_t)_{\mid\g(i,h)}=t^{-i+2} \Id.$$ 
We have $F_t.e=e$ and $F_t.(e+\pp^f)=e+\pp^f$ since $\pp^f$ is compatible with the characteristic grading. 
On the other hand, it is easy to show \cite[38.6.2]{TY} that $t^{-2}F_t\in K$, hence $F_t\in K\times\K^*\Id$. As a consequence, $F_t$ normalizes the cone $J$ and hence $(e+X(J,\sS))$. Since $e+\pp^f\subset e+\bigoplus_{i\leqslant 0}\g(i,h)$, we have 
\begin{equation}\label{Ft}\lim_{t\rightarrow 0} F_t.(e+x)=e\end{equation} for any $x\in \pp^f$. Since $(F_t)_{t\in\K^*}$ is a one parameter subgroup of $\GL(\g)$, each irreducible component of $e+X(J,\sS)$ is stable under the $F_t$-action and the lemma follows.\end{proof}

Next, we wish to enlight the strong connection linking $J$ and $X(J,\sS)$. This was lacking in \cite{Bu2} in the general case and the following Proposition renders some definitions and techniques of this paper obsolete. For example, it can easily be used together with \cite[38.6.9(i)]{TY} to show that condition ($\clubsuit$) of \cite[\S9]{Bu2} is automatically satisfied.

\begin{prop}\label{propsmooth} Let $\lf$ be a Levi subalgebra of $\g$ arising from $\pp$ and $\sS=(e,h,f)$ a normal $\sld$-triple of $\lf'$. Let $J$ be an irreducible locally closed $K$-stable subset of $\pp$ such that $X_{\lf}(J,\sS)\neq\emptyset$ and $Y$ be a locally closed subset in $X_{\lf}(\pp,\sS)$. Set $c(J):=\codim_\pp J$, $c(Y):=\codim_{X_{\lf}(\pp,\sS)}Y$ and $r:=\codim_{\pp}X_{\lf}(\pp,\sS)$.
\begin{enumerate}
\item[(i)] The action of $K$ on $\pp$ yields a smooth morphism $\psi:K\times(e+X_{\lf}(\pp,\sS))\rightarrow \pp$ of relative dimension $\dim K-r$.
\item[(ii)] $\codim_{\pp}K.(e+Y)\leqslant c(Y)$.
\item[(iii)] $X_{\lf}(J,\sS)$ is a pure locally closed variety of codimension $c(J)$ in $X_{\lf}(\pp,\sS)$.  
\item[(iv)] Let $X_0$ be an irreducible component of $X_{\lf}(J,\sS)$. Then $K.(e+X_0)$ is dense in $J$. 
\item[(v)] $\psi$ restricts to a smooth dominant morphism \[\psi_J:K\times(e+X_{\lf}(J,\sS))\rightarrow J.\]
\end{enumerate}
\end{prop}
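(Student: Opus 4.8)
The whole statement rests on part (i): once $\psi$ is known to be smooth of relative dimension $\dim K-r$, items (ii)--(v) follow by dimension counting and base change. So the plan is to first establish the transversality underlying smoothness, namely
\[[\kk,x]+\pp_{\lf}^f=\pp\qquad\text{for every }x\in e+X_{\lf}(\pp,\sS).\]
Since $e+X_{\lf}(\pp,\sS)=(e+\pp_{\lf}^f)\cap U_{\lf}$ is open in the affine subspace $e+\pp_{\lf}^f$ (Lemma \ref{ts}), its tangent space at any point is $\pp_{\lf}^f$, and the differential of $\psi$ at $(k,x)$ has image $k\cdot([\kk,x]+\pp_{\lf}^f)$. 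As $k$ is invertible, the displayed equality says exactly that $\psi$ is submersive; the source being smooth, this yields smoothness, with relative dimension $\dim(K\times(e+X_{\lf}(\pp,\sS)))-\dim\pp=\dim K-r$.

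The transversality is the crux, and I would prove it by splitting $\pp=\pp_{\lf}\oplus\pp_{\lf^{\perp}}$ and using that $\ad x$ preserves both $\lf$ and the $\lf$-module $\lf^{\perp}$, so that $[\kk,x]=[\kk_{\lf},x]\oplus[\kk_{\lf^{\perp}},x]$ with $[\kk_{\lf},x]\subset\pp_{\lf}$ and $[\kk_{\lf^{\perp}},x]\subset\pp_{\lf^{\perp}}$. For the $\lf$-component I need the Slodowy transversality inside the symmetric pair $\lf=\kk_{\lf}\oplus\pp_{\lf}$, i.e. $[\kk_{\lf},x]+\pp_{\lf}^f=\pp_{\lf}$ for all $x\in e+\pp_{\lf}^f$. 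At $x=e$ this is the $\sld$-isotypic decomposition: from $\lf=\lf^f\oplus[e,\lf]$ and the parity of $\ad e$ one gets $\pp_{\lf}=[\kk_{\lf},e]\oplus\pp_{\lf}^f$. To propagate it to the whole slice I would rerun the contraction argument of Lemma \ref{lmelimX}: the transversality locus is open and stable under $F_t$ (which preserves $\pp_{\lf}^f$ and lies in $K_{\lf}\times\K^*\Id$), and every point of $e+\pp_{\lf}^f$ is contracted by $F_t$ to $e$, forcing transversality everywhere. For the $\lf^{\perp}$-component, the point of restricting to $U_{\lf}$ is precisely Lemma \ref{ts}(iv): for $x\in U_{\lf}$ one has $[\kk,x]=\pp_{\lf^{\perp}}\oplus[\kk_{\lf},x]$, whence $[\kk_{\lf^{\perp}},x]=\pp_{\lf^{\perp}}$. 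Combining the two components gives $[\kk,x]+\pp_{\lf}^f=\pp_{\lf}\oplus\pp_{\lf^{\perp}}=\pp$. I expect this to be the main obstacle, chiefly in being careful that $e$ itself need not lie in $U_{\lf}$: the contraction must be run inside $\pp_{\lf}$, where $U_{\lf}$ plays no role, while the $\lf^{\perp}$-directions are handled separately through $U_{\lf}$.

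With (i) in hand the rest is formal, the key remark being that $\psi^{-1}(J)=K\times(e+X_{\lf}(J,\sS))$ because $J$ is $K$-stable. For (iii), a smooth morphism is flat with equidimensional fibres and, locally on the source, factors through a projection from an affine-space bundle, so the preimage of the irreducible codimension-$c(J)$ set $J$ is pure of codimension $c(J)$; discarding the smooth factor $K$ shows $X_{\lf}(J,\sS)$ is pure of codimension $c(J)$ in $X_{\lf}(\pp,\sS)$. For (ii), I would count dimensions: the fibres of $\psi$ restricted to $K\times(e+Y)$ sit inside fibres of $\psi$, hence have dimension at most $\dim K-r$, so $\dim K.(e+Y)\geq\dim(K\times(e+Y))-(\dim K-r)=\dim\pp-c(Y)$, which is the claim. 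Part (iv) then combines (ii) and (iii): an irreducible component $X_0$ of $X_{\lf}(J,\sS)$ has codimension $c(J)$ by (iii), so by (ii) $K.(e+X_0)$ has codimension at most $c(J)$ in $\pp$; being contained in the irreducible $J$ of codimension exactly $c(J)$, it is dense in $J$. Finally (v) is immediate: $\psi_J$ is the base change of the smooth morphism $\psi$ along $J\hookrightarrow\pp$, hence smooth, and it is dominant by (iv).
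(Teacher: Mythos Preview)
Your proof is correct and follows essentially the same route as the paper: the reduction of (i) to the Slodowy transversality inside $\lf$ via Lemma~\ref{ts}(iv), propagated from $e$ to the whole slice by the $F_t$-contraction, is exactly the paper's argument, and (ii), (iv), (v) are handled identically. The only minor variation is in (iii), where you invoke flatness of $\psi$ directly to get purity of the preimage, whereas the paper obtains one codimension inequality from Hartshorne's intersection bound and the other from (ii); both are standard and amount to the same thing.
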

\begin{proof}
First of all, $\psi$ is $K$-equivariant with respect to the $K$-action on the domain of $\psi$ given by $k'.(k,y)=(k'k,y)$. Hence it is sufficient to check that $\psi$ is smooth at points of the form $(1_K,y)$. Moreover, since the domain and codomain of $\psi$ are smooth (recall from Lemma \ref{ts} that $e+X_{\lf}(\pp,\sS)$ is open in the affine space $e+\pp_{\lf}^f$), it is sufficient to check that the induced map on the tangeant spaces is surjective \cite[VII Remark 1.2]{AK}.
At the point $(1_K,y)$, it is given by $(d_{\psi})_{(1,y)}:\func{\kk\times \pp_{\lf}^f}{\pp}{(k,x)}{[k,y]+x}$.
Hence, thanks to Lemma \ref{ts}, we have $(d_{\psi})_{(1,y)}(\kk\times \pp_{\lf}^f))=[\kk,y]\oplus \pp_{\lf}^f=\pp_{\lf^{\perp}}\oplus [\kk_{\lf},y]\oplus\pp_{\lf}^f$. 
This implies that $d_\psi$  is smooth at $(1,y)$ if and only if $d_{\psi'}$ is, where $\psi':K_{\lf}\times(e+\pp_{\lf}^f) \rightarrow \pp_{\lf}$ is the morphism yielded by the action of $K_{\lf}$ on $\pp_{\lf}$. In other words, we can restrict ourselves to the case $\g=\lf$.

In this case, we follow \cite[7.4 Corollary~1]{Sl}. We easily see from graded $\sld$-theory that $\pp=[\kk,e]\oplus\pp^f$ so $\psi$ is smooth at $(1,e)$. Consider the $(F_{t})_{t\in\K^*}$-action on $K\times (e+\pp^f)$, given by $F_t.(k,x)=(F_tkF_{t^{-1}},F_{t}(x))$. Then $\psi$ is equivariant with respect to this action. Hence the open set of smooth points of $\psi$ is stable under $F_t$. 
Therefore, it follows from \eqref{Ft} that $\psi$ is smooth on $1_{K}\times(e+\pp^f)$ and hence on the whole domain $K\times(e+\pp^f)$. \\
(ii) We have $K.(e+Y)=\psi(K\times(e+Y))$. Assertion (i) implies that $\psi$ has constant fiber dimension. Hence, the dimension of any fiber of $\psi_{|K\times(e+Y)}$ is less or equal to $\dim K-r$ and we have $\dim K.(e+Y)\geqslant\dim K+\dim Y-(\dim K-r)=\dim Y+r
=\dim \pp-c(Y)$.\\
(iii-iv) \cite[I Proposition 7.1]{Ha} (which also holds for locally closed subsets of an affine space)  states that $\codim_{\pp} X_0\leqslant c(J)+r$ for any irreducible component $X_0$ of $X_{\lf}(J,\sS)$. On the other hand, since $J$ is $K$-stable, we have $K.(e+X_0)\subset J$ and we deduce $\codim_{X_{\lf}(\pp,\sS)}X_0\geqslant c(J)$ from (ii). Since $J$ is irreducible, this proves (iii) and (iv).\\
(v) It follows from (iv) that $\psi_J$ is dominant. In order to obtain smoothness, we apply the argument of base extension by $J\hookrightarrow \pp$ as in \cite[2.8 and above]{IH}. More details can also be found in \cite[Proposition 3.9]{Bu2}. 
\end{proof}

\begin{prop}\label{propopen}
Let $\lf,J,\sS$ be as in Proposition \ref{propsmooth}, omitting the assumption $X_{\lf}(J,\sS)\neq\emptyset$. Then
\begin{enumerate}
\item[(i)] $X_{\lf}(J,\sS)$ is a dense open subset of $X_{\lf}(\overline{J},\sS)$,
\item[(ii)] $K.(e+X_{\lf}(J,\sS))$ is an open subset of $J$. 
\end{enumerate}
\end{prop}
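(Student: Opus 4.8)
The plan is to treat the two assertions separately; in each I first dispose of an empty case and then do the real work. For (i) the dichotomy is on whether $X_{\lf}(\overline{J},\sS)$ is empty. If $X_{\lf}(\overline{J},\sS)=\emptyset$, then $X_{\lf}(J,\sS)\subset X_{\lf}(\overline{J},\sS)$ is empty as well and the statement holds trivially, so I would assume $X_{\lf}(\overline{J},\sS)\neq\emptyset$. I would first establish the openness. By definition $e+X_{\lf}(\overline{J},\sS)=(e+\pp_{\lf}^f)\cap U_{\lf}\cap\overline{J}$, and since $J$ is locally closed it is open in $\overline{J}$, say $J=\overline{J}\cap V$ with $V$ open in $\pp$. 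Intersecting with $(e+\pp_{\lf}^f)\cap U_{\lf}$ gives $e+X_{\lf}(J,\sS)=\bigl(e+X_{\lf}(\overline{J},\sS)\bigr)\cap V$, so $X_{\lf}(J,\sS)$ is open in $X_{\lf}(\overline{J},\sS)$. It then remains to prove density, which for an open subset amounts to showing that $X_{\lf}(J,\sS)$ meets every irreducible component of $X_{\lf}(\overline{J},\sS)$.

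For the density I would apply Proposition \ref{propsmooth} to $\overline{J}$ in place of $J$: indeed $\overline{J}$ is irreducible, closed (hence locally closed), $K$-stable, and now has nonempty slice. Part (iv) then says that for every irreducible component $X_0$ of $X_{\lf}(\overline{J},\sS)$ the set $K.(e+X_0)$ is dense in $\overline{J}$. Since $J$ is a nonempty open subset of $\overline{J}$, this dense set must meet $J$: there exist $k\in K$ and $x\in X_0$ with $k.(e+x)\in J$. The crucial point is the $K$-stability of $J$, which yields $e+x\in J$ as well. As $e+x\in e+X_0\subset(e+\pp_{\lf}^f)\cap U_{\lf}$, I conclude $e+x\in(e+\pp_{\lf}^f)\cap U_{\lf}\cap J=e+X_{\lf}(J,\sS)$, hence $x\in X_0\cap X_{\lf}(J,\sS)$. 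Thus $X_{\lf}(J,\sS)$ meets every component and is therefore dense; as a byproduct this shows $X_{\lf}(J,\sS)\neq\emptyset$ whenever $X_{\lf}(\overline{J},\sS)\neq\emptyset$, consistent with the case split.

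For (ii), if $X_{\lf}(J,\sS)=\emptyset$ then $K.(e+X_{\lf}(J,\sS))=\emptyset$ is trivially open in $J$, so I assume $X_{\lf}(J,\sS)\neq\emptyset$ and invoke Proposition \ref{propsmooth}(v): the morphism $\psi_J\colon K\times(e+X_{\lf}(J,\sS))\rightarrow J$ is smooth (and dominant). A smooth morphism is flat and of finite type, hence open, so its image is open in $J$; since that image is exactly $K.(e+X_{\lf}(J,\sS))$, the claim follows.

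The only genuinely nonformal step is the density in (i), specifically the passage from \emph{$K.(e+X_0)$ is dense in $\overline{J}$} to \emph{$X_0$ meets $J$ within the slice}. This is where the structure of the problem is really used: density of $J$ in $\overline{J}$ guarantees that some point of $K.(e+X_0)$ lands in $J$, and then the $K$-stability of $J$ lets me transport that point back along its orbit into the slice $e+\pp_{\lf}^f$ itself. Everything else (local closedness giving openness in (i), openness of smooth morphisms giving (ii)) is routine, so I expect the $K$-stability/orbit-transport argument to be the main point to get right.
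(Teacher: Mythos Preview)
Your proof is correct and follows essentially the same approach as the paper: for (i) you use the same case split, the same openness-from-local-closedness observation, and the same density argument via Proposition~\ref{propsmooth}(iv) applied to $\overline{J}$ together with $K$-stability of $J$; for (ii) you invoke openness of smooth morphisms via Proposition~\ref{propsmooth}(v), exactly as the paper does. The only cosmetic difference is that you spell out the empty case in (ii) explicitly, whereas the paper absorbs it into a reference to (i).
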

\begin{proof}
(i) If $X_{\lf}(\overline{J},\sS)=\emptyset$, there is nothing to prove. From now on, we assume that it is non-empty. Since $J$ is open in $\overline{J}$, the subset $X_{\lf}(J,\sS)$ is open in $X_{\lf}(\overline{J},\sS)$.
Let $X_0$ be an irreducible component of $X_{\lf}(\overline{J},\sS)$. Then $K.(e+X_0)$ is a dense constructible subset of $\overline{J}$ by Propostion \ref{propsmooth}(iv). Hence it meets $J$ and, since $J$ is $K$-stable, we have $J\cap X_0\neq\emptyset$. In other words, the open subset $X_{\lf}(J,\sS)\subset X_{\lf}(\overline{J},\sS)$ meets each irreducible components of $X_{\lf}(\overline{J},\sS)$.\\
(ii) Since smooth morphisms are open \cite[V Theorem 5.1 and VII Theorem 1.8]{AK}, the result is a consequence of (i) and Proposition \ref{propsmooth}(v).
\end{proof}

The equivalent statements (i), (ii) and (iii) of the following Theorem are inspired by similar properties in the Lie algebra case, which can be deduced from parabolic induction theory when $J$ is a decomposition class (see \emph{e.g.} \cite[\S2]{Bo}).

\begin{thm}\label{genpart}
Let $e\in \pp$ be a nilpotent element embedded in a normal $\sld$-triple $\sS:=(e,h,f)$ and let $J$ be a locally closed $K$-stable cone. The following conditions are equivalent:
\begin{enumerate}
\item[(i)] $e\in \overline{J}$.
\item[(ii)] There exists a non-empty open set $U\subset J$ such that $e\in \overline{K.(\K^*z)}$ for any $z\in U$.
\item[(iii)] There exists $z\in J$ such that $e\in \overline{K.(\K^*z)}$.
\item[(iv)] $X(J,\sS)\neq\emptyset$.
\item[(v)] $X(\overline{J},\sS)\neq\emptyset$.
\end{enumerate}
\end{thm}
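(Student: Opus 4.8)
The plan is to establish the equivalences by proving a cycle together with a few direct implications, leaning heavily on the Slodowy-slice machinery already set up. The cleanest route I see is to show $(iv)\Leftrightarrow(v)$ first, then $(i)\Leftrightarrow(v)$ using Lemma \ref{lmelimX} and Proposition \ref{propsmooth}, and finally close the loop with $(ii)\Rightarrow(iii)\Rightarrow(iv)$ and $(i)\Rightarrow(ii)$, where the one-parameter contraction $(F_t)$ does the geometric work.

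\medskip

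First I would dispatch $(iv)\Leftrightarrow(v)$. Since $J\subset\overline J$, we trivially have $X(J,\sS)\subset X(\overline J,\sS)$, so $(iv)\Rightarrow(v)$. For the converse, apply Proposition \ref{propopen}(i) with $\lf=\g$: it states precisely that $X(J,\sS)$ is dense open in $X(\overline J,\sS)$, so if the latter is non-empty then so is the former. Next, for $(v)\Rightarrow(i)$, take any $z\in\overline J$ with $z\in e+\pp^f$ (which exists by $(v)$); then Lemma \ref{lmelimX}, applied to the $K$-stable cone $\overline J$, gives $e\in\overline{X(\overline J,\sS)}\subset\overline{J}$ directly — indeed the lemma concludes $e\in\overline J$. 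This gives $(v)\Rightarrow(i)$.

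\medskip

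The substantive direction is $(i)\Rightarrow(ii)$, and I expect this to be the main obstacle. Assuming $e\in\overline J$, I want to produce a dense open $U\subset J$ on which every point can be contracted to $e$ inside $\overline{K.(\K^*z)}$. The idea is to use the contracting one-parameter family $F_t\in K\times\K^*\Id$ from Lemma \ref{lmelimX}, which satisfies $\lim_{t\to 0}F_t.(e+x)=e$ for $x\in\pp^f$. Since $e\in\overline J$ and $J$ is a $K$-stable cone, the orbit-cone $\overline{K.(\K^*z)}$ should contain $e$ for generic $z$; the issue is to see that generic $z\in J$ lies in a $K\times\K^*\Id$-translate of the slice $e+\pp^f$ so that the limit formula applies. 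Concretely, using the smoothness and dominance of $\psi_J$ from Proposition \ref{propsmooth}(v), the image $K.(e+X(J,\sS))$ is open dense in $J$ (Proposition \ref{propopen}(ii)); take $U$ to be this open set. For $z=k.(e+x)\in U$ with $x\in X(J,\sS)$, the family $F_t$ contracts $e+x$ to $e$ while preserving the cone $K.(\K^*(e+x))$, whence $e\in\overline{K.(\K^*z)}$. This packages $(iv)$ into $(ii)$ simultaneously.

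\medskip

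Finally I would close the cycle cheaply. The implication $(ii)\Rightarrow(iii)$ is immediate since $U$ is non-empty. For $(iii)\Rightarrow(iv)$, suppose $z\in J$ with $e\in\overline{K.(\K^*z)}$; I want to exhibit a point of $J$ in $e+\pp^f$. The slice $e+\pp^f$ is transverse to the orbit $K.e$ by graded $\sld$-theory (the decomposition $\pp=[\kk,e]\oplus\pp^f$ used in the proof of Proposition \ref{propsmooth}), so a standard transversality/Slodowy argument shows that the $K\times\K^*\Id$-stable cone $\overline{K.(\K^*z)}$ meeting $e$ forces $K.(\K^*z)$ to meet $e+\pp^f$; intersecting with the $K$-stable $J$ gives $X(J,\sS)\neq\emptyset$. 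Since $(iv)\Leftrightarrow(v)\Leftrightarrow(i)$ is already in hand, the five conditions are equivalent. The only delicate points are the transversality argument in $(iii)\Rightarrow(iv)$ and making the genericity in $(i)\Rightarrow(ii)$ precise via $\psi_J$; both are controlled by the results of Section \ref{slosli} assumed above.
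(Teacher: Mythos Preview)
Your approach is essentially the paper's: the key ingredients are the same (Lemma~\ref{lmelimX} for the contraction, Proposition~\ref{propopen} for openness/density of the slice, and the choice $U=K.(e+X(J,\sS))$ for the substantive implication towards (ii)). Two points deserve correction.

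First, a genuine small gap: when you invoke Proposition~\ref{propopen}(i) to get $(v)\Rightarrow(iv)$, note that this proposition (via Proposition~\ref{propsmooth}) assumes $J$ \emph{irreducible}, whereas Theorem~\ref{genpart} does not. The paper handles this by applying Proposition~\ref{propopen}(i) to an irreducible component of $\overline{J}$ that meets $e+\pp^f$; you should do the same.

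Second, your $(iii)\Rightarrow(iv)$ is an unnecessary detour. The implication $(iii)\Rightarrow(i)$ is trivial (since $K.(\K^*z)\subset J$ gives $\overline{K.(\K^*z)}\subset\overline J$), and you already have $(i)\Leftrightarrow(iv)$. The ``standard transversality/Slodowy argument'' you sketch is really just re-running $(i)\Rightarrow(v)\Rightarrow(iv)$ on the smaller cone $K.(\K^*z)$; there is no need to invoke transversality separately. The paper's cycle $(ii)\Rightarrow(iii)\Rightarrow(i)\Rightarrow(v)\Rightarrow(iv)\Rightarrow(ii)$ is cleaner and avoids this redundancy. Relatedly, your ``$(i)\Rightarrow(ii)$'' is, as you yourself note, really $(iv)\Rightarrow(ii)$; it is clearer to state it that way, exactly as the paper does.
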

\begin{proof}
The implications (ii)$\Rightarrow$(iii)$\Rightarrow$(i)$\Rightarrow$(v) are obvious and (v)$\Rightarrow$(iv) is a consequence of Proposition \ref{propopen}(i) applied to an irreducible component of $\overline{J}$ meeting $e+\pp^f$.\\
Let us now prove (iv)$\Rightarrow$(ii). Take $U=K.(e+X(J,\sS))$. It is open in J by Proposition \ref{propopen}(ii) and, for any $z\in U$, we have $X(K.(\K^*z), \sS)\neq\emptyset$. Then, our implication is a consequence of Lemma \ref{lmelimX} applied to the $K$-stable cone $K.(\K^*z)$.
\end{proof}

From the computational point of view, the previous theorem may be of key importance. Indeed, the existence of a degeneration from $J$ to $e$ reduces to the existence of an element in the intersection of $J$ with the affine space $e+\pp^f$, which might be much easier to check.

On the other hand, we have the following proposition derived from \cite[Theorem 38.6.9 (i)]{TY}.

\begin{prop}\label{deformation0}
If $J$ is a $K$-stable cone of $\pp^{(r)}=\{x\in \pp|\dim K.x=r\}$, then there exists a nilpotent element $e\subset \pp$ such that 
\begin{enumerate}
\item[(i)] $e\in \overline{J}$,
\item[(ii)] $\dim K.e=r$.
\end{enumerate}
\end{prop}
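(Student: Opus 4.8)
The plan is to reduce to a single element of $J$ and then pass to its asymptotic cone. First I would pick any $z\in J$; by hypothesis $\dim K.z=r$, and since $J$ is a $K$-stable cone the whole set $K.(\K^*z)=\K^*(K.z)$ is contained in $J$, so that $\overline{K.(\K^*z)}\subseteq\overline{J}$. It therefore suffices to produce a \emph{nilpotent} $e\in\overline{K.(\K^*z)}$ with $\dim K.e=r$: this immediately yields both (i) and (ii), and matches the degeneration philosophy of the implication (iii)$\Rightarrow$(i) in Theorem \ref{genpart}.

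To find such an $e$ I would consider the asymptotic cone $\CC:=\lim_{t\to0}\,t\cdot\overline{K.z}$, that is, the fibre at $0$ of the scheme-theoretic closure in $\pp\times\K$ of the family $(t\cdot\overline{K.z})_{t\in\K^*}$. I need two properties of $\CC$. First, $\CC\subseteq(\NN\cap\pp)\cap\overline{K.(\K^*z)}$: the inclusion in the cone is clear since $t\cdot\overline{K.z}=\overline{K.(tz)}\subseteq\overline{K.(\K^*z)}$ for $t\neq0$, and nilpotency of the limit points follows from Kostant--Rallis, because every positive-degree $K$-invariant $p$ is constant on $\overline{K.z}$, whence $p(t\cdot w)=t^{\deg p}p(z)\to0$ as $t\to0$. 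Second, $\CC$ is $K$-stable and equidimensional of dimension $\dim K.z=r$: preservation of dimension comes from flatness of the family, and the equidimensionality is exactly the input provided by the cited Theorem 38.6.9(i) of \cite{TY}.

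I would then conclude using the finiteness of nilpotent $K$-orbits in $\pp$ (Kostant--Rallis): the nilpotent cone $\NN\cap\pp$ being a finite union of $K$-orbits, every irreducible $K$-stable closed subset of it is the closure of a single nilpotent orbit. Choosing an irreducible component $\overline{K.e}$ of $\CC$ (which is non-empty, with $e=0$ in the trivial case $r=0$), equidimensionality forces $\dim\overline{K.e}=r$, hence $\dim K.e=\dim\overline{K.e}=r$, while $e$ is nilpotent; and by the first property $e\in\overline{K.(\K^*z)}\subseteq\overline{J}$. The main obstacle is precisely the equidimensionality of $\CC$: a naive one-parameter contraction in the spirit of the proof of Lemma \ref{lmelimX}, applied to the Jordan decomposition $z=s+n$ with $s,n\in\pp$ and $\lf=\g^s$, only degenerates $z$ to its nilpotent part $n$, for which in general $\dim K.n<r$, so a crude limit loses orbit dimension. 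It is exactly the flat-limit/asymptotic-cone statement of \cite[38.6.9(i)]{TY} that guarantees a component of $\CC$ of full dimension $r$, which upon intersection with the finitely many nilpotent orbits produces a nilpotent orbit of the required dimension.
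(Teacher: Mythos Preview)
Your argument is correct and is essentially an unpacking of the paper's proof, which consists solely of the citation to \cite[Theorem~38.6.9(i)]{TY}. The asymptotic-cone/flat-limit construction you describe is precisely the mechanism behind that reference, so there is nothing to add.
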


\begin{prop}\label{deformation1}
Let $J\subset \pp^{(r)}$ be a locally closed $K$-stable cone and $(e_i)_{i\in I}$ a set of representatives of the nilpotent $K$-orbits satisfying (i) and (ii) of Proposition \ref{deformation0} embedded in normal $\sld$-triples $(\sS_i)_{i\in I}$. Let $U_i:=K.(e_i+X(J,\sS_i))$.\\
 Then, $(U_i)_{i\in I}$ is an open cover of $J$.
\end{prop}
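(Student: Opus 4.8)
```latex
The plan is to deduce Proposition \ref{deformation1} directly from the machinery already established, combining Proposition \ref{deformation0}, Proposition \ref{propopen}(ii), and Theorem \ref{genpart}. The key observation is that each $U_i$ is open in $J$ by construction, so the only real content is that the $U_i$ \emph{cover} $J$. Thus I would split the argument into two parts: openness (essentially free) and covering (the heart of the matter).

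First I would record openness. For each $i\in I$, the set $X(J,\sS_i)=X_{\g}(J,\sS_i)$ is defined via the Slodowy slice with $\lf=\g$, and Proposition \ref{propopen}(ii) states precisely that $U_i=K.(e_i+X(J,\sS_i))$ is an open subset of $J$. This requires $J$ to be a locally closed $K$-stable subset, which holds by hypothesis, and it does not require $X(J,\sS_i)\neq\emptyset$ (that assumption is omitted in Proposition \ref{propopen}). So each $U_i$ is open in $J$, and a fortiori $\bigcup_i U_i$ is open in $J$; note that some $U_i$ may be empty, which causes no difficulty.

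Next, the covering. Fix an arbitrary $z\in J$ and let $S_z:=\overline{K.(\K^*z)}$, a $K$-stable cone contained in $\overline{J}$. Since $z\in \pp^{(r)}$ and $K.(\K^*z)$ is a $K$-stable cone, I would apply Proposition \ref{deformation0} to produce a nilpotent $e\in \overline{K.(\K^*z)}$ with $\dim K.e=r$; by the orbit-dimension hypothesis this $e$ lies in the $K$-orbit of some $e_i$ with $i\in I$. Here the subtle point is matching: I must argue that the nilpotent orbits arising in $\overline{K.(\K^*z)}$ for the various $z\in J$ are among the finitely many orbits $(K.e_i)_{i\in I}$ selected in Proposition \ref{deformation0}. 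This follows because $(e_i)_{i\in I}$ is, by hypothesis, a set of representatives of \emph{all} nilpotent $K$-orbits satisfying (i) and (ii) of Proposition \ref{deformation0} relative to $J$, and $e\in\overline{K.(\K^*z)}\subset\overline{J}$ satisfies both conditions. After replacing $e$ by a conjugate, I may assume $e=e_i$ for a suitable $i$, and the $\sld$-triple $\sS_i=(e_i,h_i,f_i)$ is the chosen normal triple.

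Finally I would invoke Theorem \ref{genpart} with the cone $K.(\K^*z)$ and the triple $\sS_i$. We have just shown $e_i\in\overline{K.(\K^*z)}$, i.e.\ condition (i) of Theorem \ref{genpart} holds for the $K$-stable cone $K.(\K^*z)$; the theorem then yields condition (iv), namely $X(K.(\K^*z),\sS_i)\neq\emptyset$. Picking an element of this slice and translating back through the $K$-action shows $z\in K.(e_i+X(J,\sS_i))=U_i$, since $K.(\K^*z)\subset J$ forces $X(K.(\K^*z),\sS_i)\subset X(J,\sS_i)$. As $z\in J$ was arbitrary, $J=\bigcup_{i\in I}U_i$, completing the cover. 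The main obstacle I anticipate is the bookkeeping of the previous paragraph: ensuring that the nilpotent orbit degenerating from $K.(\K^*z)$ genuinely belongs to the preselected index set $I$, rather than to some orbit satisfying (i)--(ii) for $\overline{K.(\K^*z)}$ but not for $J$ itself; this is resolved by the inclusion $\overline{K.(\K^*z)}\subset\overline{J}$ together with the dimension equality $\dim K.e=r=\dim K.z$, which guarantees $e$ lies in $\pp^{(r)}\cap\overline{J}$ and hence satisfies the defining conditions with respect to $J$.
```
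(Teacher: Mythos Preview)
Your approach is essentially identical to the paper's: openness via Proposition~\ref{propopen}(ii), then for each $z\in J$ apply Proposition~\ref{deformation0} to the cone $K.(\K^*z)$ to land on some $e_i$, and finally invoke Theorem~\ref{genpart} to produce a point of $X(K.(\K^*z),\sS_i)\subset X(J,\sS_i)$.

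There is, however, a small slip in your last step. You write that ``translating back through the $K$-action shows $z\in U_i$'', but an element of $e_i+X(K.(\K^*z),\sS_i)$ lies in $K.(\K^*z)$, not in $K.z$; so the $K$-action alone only yields $tz\in U_i$ for some $t\in\K^*$. To conclude $z\in U_i$ you need that $U_i$ is stable under $\K^*\Id$ as well as under $K$. The paper states this explicitly (``Since $U_i$ is stable under the action of $K\times\K^*\Id$, we get $z\in U_i$''), and it follows from the contracting $\K^*$-action $(F_t)_{t\in\K^*}$ of Lemma~\ref{lmelimX}: each $F_t$ lies in $K\times\K^*\Id$, fixes $e_i$, stabilises $e_i+\pp^{f_i}$, and stabilises the $K$-stable cone $J$, hence stabilises $e_i+X(J,\sS_i)$; combined with $K$-stability this gives the required $K\times\K^*\Id$-stability of $U_i$. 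Once you insert this, your argument is complete and matches the paper's.
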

\begin{proof}
The sets $U_i$ are open thanks to Proposition \ref{propopen}(ii). Pick $z\in J$, then it follows from Proposition \ref{deformation0} that there exists $i\in I$ such that $e_i\in \overline{K.(\K^*z)}$. Applying Theorem~\ref{genpart} to $K.(\K^*z)$, we get $X(K.(\K^*z), \sS_i)\neq\emptyset$, so $U_i\cap K.(\K^*z)\neq\emptyset$. Since $U_i$ is stable under the action of $K\times \K^*\Id$, we get $z\in U_i$.
\end{proof}

\begin{Rqs}\label{smoothequiv}
\begin{enumerate}[-]
\item A major consequence of this Proposition is that the whole geometry of $J$ is closely related to the geometry of its different Slodowy slices. Indeed, locally, we can assume that $J=K.(e_i+X(J,\sS_i))$ for some $i\in I$ and it follows from Proposition \ref{propsmooth}(v) that this variety is smoothly equivalent to $X(J,\sS_i)$. 
\item This also provides a more solid ground to the philosophy of \cite[\S9]{Bu2}, where it is proven in some particular cases that the Slodowy slices contains enough information to describe the whole variety.  
\item The main drawback of this approach is that it does not yield $|I|=1$ when $J$ is irreducible in the Lie algebra case, contrary to other parameterization such as \cite[\S5]{BK}. However, this drawback is somehow necessary since the property $|I|=1$ may fail in the symmetric case (see \emph{e.g.} \cite[39.6.3]{TY} for the description of the regular sheet when $(\g,\kk)=(\sld,\so_2)$). 
\end{enumerate}
\end{Rqs}

We have several examples in mind of such locally closed $K$-stable cone included in some $\pp^{(r)}$. We have already seen that $K.(\K^*z)$ plays a role in the previous proofs. We can also consider a $K$-decomposition class $J_1$, or its regular closure $\overline{J_1}^{\bullet}$. Sheets are particular examples of the last type.

\section{$K$-decomposition classes and induction}
\label{sliceind}
Recall from the beginning of Section~\ref{slosli} that a Levi subalgebra $\lf$ is said to \emph{arise from $\pp$} if it is the centralizer of a semisimple element $s\in \pp$. Recall also that, in this case, the orbits in $\pp_{\lf}$ associated to the symmetric Lie algebra structure $\lf=\kk_{\lf}\oplus\pp_{\lf}$ coincide with the $K_{\lf}=(K^s)^{\circ}$-orbits.
\begin{defi}
A \emph{datum} of $(\g,\kk)$ is a pair $(\lf,\Od)$ where $\lf$ is a Levi subalgebra of $\g$ arising from $\pp$ and $\Od$ is a nilpotent $K_{\lf}$-orbit in $\pp_{\lf}$.

The \emph{$K$-decomposition class} defined by a datum $(\lf,\Od)$ is
\[J(\lf,\Od):=J_{(\g,\kk)}(\lf,\Od):=K.(\cpl+\Od)
.\]
\end{defi}
\begin{Rqs}\label{rqdec}
\begin{enumerate}[-]
\item These definitions make sense with $(\g,\kk)$ replaced by any other reductive other reductive symmetric pair. For instance, if $\lf_2\subset \g$ is a Levi subalgebra arising from $\pp$ and $(\lf_1,\Od_1)$ is a datum of $(\lf_2, \kk_{\lf_2})$, we have \[ J_{(\lf_2,\kk_{\lf_2})}(\lf_1,\Od_1)= K_{\lf_2}.(\cc_{\pp_{\lf_2}}(\lf_1)^{reg}+\Od_1)=K_{\lf_2}.(\cc_{\pp}(\lf_1)^{reg}+\Od_1)
\]
where $A^{reg}:=\{a\in A| \forall a'\in A,  \dim K_{\lf_2}.a\geqslant \dim K_{\lf_2}.a'\}$ is a regularity condition taken with respect to the action of $K_{\lf_2}$.
\item 
The classes we have defined are the symmetric analogue of decomposition classes studied in \cite{BK, Bo, Br1, Br2} in the Lie algebra case. We call these last classes \emph{$G$-decomposition classes} and they are of the form
$J_{\g}(\lf,\Od):=G.(\cc_{\g}(\lf)^{\bullet}+\Od)$ with $\lf\subseteq\g$ any Levi subalgebra and $\Od$ a nilpotent $L$-orbit in $\lf$. In fact, using notation of Section \ref{slosli}, the $G$-decomposition classes are exactly the images of the $\widehat K$-decomposition classes under the natural isomorphism $\hat \pp\cong\g$. These are of the form $J_{(\hat \g,\hat \kk)}(\hat\lf,\hat\Od)$ with $\hat \lf=\{(x,y)| x\in \lf, y\in \lf\}$ and $\hat \Od=\{(x,-x)| x\in \Od\}$. The connection between a $K$-decomposition class $J$ and the $G$-decomposition class containing $J$ is studied in \cite[\S8]{Bu2}.
%
%
\item The $K$-orbits of Levi-factors arising from $\pp$ are in one to one correspondance with their $G$-orbits which are, in turn, easily characterized through the Satake diagram of $(\g,\kk)$, cf. \cite[\S7]{Bu2}. Hence, in order to have a convenient classification of $K$-decomposition classes, it would be enough to understand in which cases $(\lf,\mathcal O_1)$ is $N_{K}(\lf)/K_{\lf}$-conjugate to $(\lf, \mathcal O_2)$. This plays an important role in the classification of sheets when $\mathcal O_i$, $i=1,2$ are rigid orbits as shown in \cite[3.9, 3.10, 4.5, 4.6]{Bo}. However, for our purposes, we will not need such a classification.
\end{enumerate}
\end{Rqs}

The $K$-decomposition classes are equivalence classes of elements sharing a similar Jordan decomposition in the following sense: 
\begin{equation}J(\lf,\Od)=K.\{x=s+n\in \pp\,|\, \g^s=\lf, K_{\lf}.n=\Od\}\label{caracJc}\end{equation} 
and we refer to \cite[\S39.5]{TY} for most of the known properties of these classes (called $K$-Jordan classes in \emph{loc. cit.}) in the symmetric Lie algebra case. 
In particular, we note that $K$-decomposition classes form a finite partition of $\pp$. These classes are also irreducible, locally closed and of constant orbit dimension. As a consequence, in each sheet $S$ there exists a dense open $K$-decomposition class $J_0$ and we have \[S=\overline{J_0}^{\bullet}.\]

In the Lie algebra case, such regular closures of $G$-decomposition classes have been studied in \cite{BK}, where an important parameterization is given in Theorem~5.4 (see \cite[Lemma~3.2]{Kat} for a refinement). In \cite{Bo}, this understanding has been deepened thanks to the parabolic induction theory of nilpotent orbits. Let us recall the definition. Take two Levi subalgebras $\lf_1 \subseteq \lf_2\subseteq \g$, with corresponding subgroups $L_i\subseteq G$, $i\in\{1,2\}$. One can choose a parabolic subalgebra $\qq$ of $\lf_2$ with Levi-factor $\lf_1$. Denote by  $\mathfrak{n}_{\qq}$ the nilpotent radical of $\qq$.  Given a nilpotent $L_1$-orbit $\Od_1\subset \lf_1$, \emph{the} parabolically induced orbit from $\lf_1$ to $\lf_2$ is the dense nilpotent $L_2$-orbit $\Od_2$ of $L_2.(\Od_1+\mathfrak{n}_{\qq})\subset\lf_2$. For the purpose of our exposition, we will say in such case that the pair $(\lf_1, \Od_1)$ (resp. the $G$-decomposition class $J_{\g}(\lf_1,\Od_1)$) \emph{parabolically induces} $(\lf_2,\Od_2)$ (resp. $J_{\g}(\lf_2,\Od_2)$).

The aim of this section is to extend a significant number of the known properties of $G$-decomposition classes of \cite{BK, Bo} to $K$-decomposition classes. However, as mentioned in the introduction, the parabolic subalgebras behave badly in general for a symmetric Lie algebra. 
This is why we introduce a new notion of induction, using the tools developed in Section \ref{slosli}. This notion turns out to coincide with the parabolic induction in the Lie algebra case, see Corollary~\ref{corind}.


 


\begin{defi}\label{definduction}
\begin{enumerate}
\item \label{weakind} Given two data $(\lf_i,\Od_i)$ ($i=1,2$) of $(\g,\kk)$, we say that $(\lf_1,\Od_1)$ \emph{weakly slice induces} $(\lf_2,\Od_2)$ if $\lf_1\subseteq\lf_2$ and \begin{equation}J_{(\lf_2,\kk_{\lf_2})}(\lf_1,\Od_1)\cap (n_2+\pp_{\lf_2}^{m_2})\neq\emptyset\label{condind}\end{equation} where  $n_2$ is an element of $\Od_2$ embedded in a normal $\sld$-triple $\sS_2=(n_2,h_2,m_2)$ of $\lf_2$.
\item  \label{mainind} We say that $(\lf_1,\Od_1)$ \emph{slice induces} $(\lf_2,\Od_2)$ if, moreover, $\dim K_{\lf_2}.x=\dim \Od_2$ for some $x\in J_{(\lf_2,\kk_{\lf_2})}(\lf_1,\Od_1)$.

\item \label{Jordind} Given two $K$-decomposition classes $J_i$ ($i=1,2$), we say that $J_1$ (weakly) slice induces $J_2$ if there exist data $(\lf_i,\Od_i)$ with $J_i=J(\lf_i,\Od_i)$, ($i=1,2$) such that  $(\lf_1,\Od_1)$ (weakly) slice induces $(\lf_2,\Od_2)$.
\end{enumerate}
\end{defi}

When, there is no context of parabolic induction, the term \emph{induction} will always refer to slice induction.

When $J_1$ and $J_2$ are $K$-decomposition classes, we write \[\left\{\begin{array} {l l} J_1\wupind J_2& \textrm{if $J_1$ weakly slice induces $J_2$,}\\ J_1\upind J_2 & \textrm{if $J_1$ slice induces $J_2$.} \end{array}\right.\] 

\begin{Rqs}\label{rkdefind}
\begin{enumerate}[-]
\item The definitions do not depend on the choices of $n_2$ or $\sS_2$ since all of these are $K_{\lf_2}$-conjugate. Also, all the elements of $J_{(\lf_2, \kk_{\lf_2})}(\lf_1,\Od_1)$ share the same $K_{\lf_2}$-orbit dimension. Finally, it is worth noting that the notions of induction and weak induction of data do only depend on the involved data and not on the ambient Lie algebra $\g$.
\item If $\lf_2=\g$, condition \ref{condind} can be written in the nicer form:
\begin{equation}\label{condindbis}X(J(\lf_1,\Od_1), \sS_2)\neq\emptyset.\end{equation}
\item Assume that $\lf_1\subseteq\lf_2$. The data $(\lf_1,\Od_1)$ (weakly) induces $(\lf_2, \Od_2)$ if and only if $(\lf_1\cap \lf_2',\Od_1)$ (weakly) induces $(\lf_2', \Od_2)$. Indeed, $n_2+\pp_{\lf_2}^{m_2}=\cc_{\pp}(\lf_2)+(n_2+\pp_{\lf_2'}^{m_2})$ and 
$J_{(\lf_2, \kk_{\lf_2})}(\lf_1,\Od_1)=\cc_{\pp}(\lf_2)+J_{(\lf_2', \kk_{\lf_2'})}(\lf_1\cap \lf_2', \Od_1)$. In particular, when studying induction, one can always assume that the ambient Lie algebra $\g$ is semisimple.
\end{enumerate}
\end{Rqs}

Assume that $\g$ is semisimple. In view of Theorem \ref{genpart} and \eqref{condindbis}, a $K$-decomposition class $J_1$ weakly induces a nilpotent orbit $\Od$ (\emph{i.e.} the $K$-decomposition class $J(\g,\Od)$) if and only if $\Od\subset \overline{J_1}$. 

The purpose of the following theorem is to extend this property to induction of arbitrary $K$-decomposition classes. Recall that $X_{\lf}(J,\sS)$ is introduced in Definition \ref{defXl}.

\begin{thm}\label{thmind} Let $J_1,J_2$ be $K$-decomposition classes. The following conditions are equivalent.
\begin{enumerate}[(i)] 
\item \label{wi} $J_1\wupind J_2$,
\item \label{wii} $X_{\lf_2}(J_1,\sS_2)\neq\emptyset$ where $(\lf_2, \Od_2)$ is some (any) datum defining $J_2$ and $\sS_2$ is a normal $\sld$-triple in $\lf_2$ whose nilpositive element belongs to $\Od_2$.
\item \label{wiii} $J_2\cap \overline{J_1}\neq\emptyset$,
\item \label{wiv} $J_2\subset \overline{J_1}$,
\end{enumerate}
\end{thm}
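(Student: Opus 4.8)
My plan is to prove $(\ref{wi})\Leftrightarrow(\ref{wii})$ and then close the cycle $(\ref{wii})\Rightarrow(\ref{wiv})\Rightarrow(\ref{wiii})\Rightarrow(\ref{wii})$. I fix once and for all a datum $(\lf_2,\Od_2)$ defining $J_2$, with $n_2\in\Od_2$ embedded in $\sS_2=(n_2,h_2,m_2)\subset\lf_2$; since $(\ref{wiii})$ and $(\ref{wiv})$ do not refer to this datum, closing the cycle automatically gives the independence of $(\ref{wii})$ from the choice (the ``some (any)''). Two preliminaries I would record first: every $K$-decomposition class is a $K$-stable cone, so Lemma~\ref{lmelimX}, Theorem~\ref{genpart} and the contraction $(F_t)$ of \eqref{Ft} apply to $J_1$; and the repeatedly used \emph{bridge} $J_{(\lf_2,\kk_{\lf_2})}(\lf_1,\Od_1)\cap U_{\lf_2}\subseteq J_1$, valid for any datum $(\lf_1,\Od_1)$ of $J_1$ with $\lf_1\subseteq\lf_2$. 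The bridge is immediate from \eqref{caracJc}: an element $y=s+n$ of the left side lies in $U_{\lf_2}$, so $\g^s\subseteq\lf_2$ by Lemma~\ref{ts} and $\g^s=\lf_2^s$, whence the Jordan-class data computed in $(\lf_2,\kk_{\lf_2})$ and in $(\g,\kk)$ coincide on $y$.

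The implication $(\ref{wii})\Rightarrow(\ref{wi})$ I would prove by hand: for $y$ in $(n_2+\pp_{\lf_2}^{m_2})\cap U_{\lf_2}\cap J_1$ with semisimple part $s_y$, the Levi $\lf_1:=\g^{s_y}$ satisfies $\lf_1\subseteq\lf_2$ (Lemma~\ref{ts}), the pair $(\lf_1,K_{\lf_1}.(y-s_y))$ is a datum of $J_1$ by \eqref{caracJc}, and $y$ itself realises \eqref{condind}. Then $(\ref{wiv})\Rightarrow(\ref{wiii})$ is trivial. For $(\ref{wiii})\Rightarrow(\ref{wii})$ I would take $z\in J_2\cap\overline{J_1}$ and, after conjugating by $K$, assume $z=s_z+n_2$ with $\g^{s_z}=\lf_2$ and $n_2\in\Od_2$. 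Then $s_z\in\cc_\pp(\lf_2)\subseteq\pp_{\lf_2}^{m_2}$ and $\g^z\subseteq\lf_2$, so $z\in(n_2+\pp_{\lf_2}^{m_2})\cap U_{\lf_2}\cap\overline{J_1}$, i.e. $X_{\lf_2}(\overline{J_1},\sS_2)\neq\emptyset$; since $X_{\lf_2}(J_1,\sS_2)$ is dense in $X_{\lf_2}(\overline{J_1},\sS_2)$ by Proposition~\ref{propopen}(i), condition $(\ref{wii})$ follows.

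For $(\ref{wi})\Rightarrow(\ref{wii})$ I would pass to the reductive symmetric pair $(\lf_2,\kk_{\lf_2})$, in which \eqref{condind} says precisely that the slice $n_2+\pp_{\lf_2}^{m_2}$ meets the class $J^{\lf_2}_1:=J_{(\lf_2,\kk_{\lf_2})}(\lf_1,\Od_1)$. The subset $J^{\lf_2}_1\cap U_{\lf_2}$ is a nonempty open, hence dense, subvariety of the irreducible $J^{\lf_2}_1$, since it contains $K_{\lf_2}.(\cc_\pp(\lf_1)^{\bullet}+\Od_1)$ by \eqref{cplbullet}. Applying Theorem~\ref{genpart} inside $(\lf_2,\kk_{\lf_2})$ to the two $K_{\lf_2}$-stable cones $J^{\lf_2}_1$ and $J^{\lf_2}_1\cap U_{\lf_2}$, which have the same closure, the slice meets one iff it meets the other; a point of $(n_2+\pp_{\lf_2}^{m_2})\cap J^{\lf_2}_1\cap U_{\lf_2}$ lies in $J_1$ by the bridge, giving $(\ref{wii})$.

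The substantial step, which I expect to be the main obstacle, is $(\ref{wii})\Rightarrow(\ref{wiv})$: upgrading the single degeneration $n_2\in\overline{J_1}$ (immediate from $X_{\lf_2}(J_1,\sS_2)\subseteq X(J_1,\sS_2)$ and Lemma~\ref{lmelimX}) to the containment of the \emph{whole} class $J_2=K.(\cc_\pp(\lf_2)^{\bullet}+\Od_2)$. Starting from $y_0=n_2+x_0\in J_1\cap U_{\lf_2}$ with $x_0\in\pp_{\lf_2}^{m_2}$ and Jordan form $y_0=s_0+n_0$, I would first check that translating the semisimple part by any $w\in\cc_\pp(\lf_2)^{\bullet}$ preserves the class: $\g^{w+s_0}=\g^w\cap\g^{s_0}=\g^{s_0}$ because $\g^{s_0}\subseteq\lf_2$, so $w+y_0$ has the Jordan-class invariants of $y_0$ and $w+y_0\in J_1$ by \eqref{caracJc}. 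Then I would contract, using that $w$ has $h_2$-weight $0$ while $x_0\in\pp^{m_2}\subseteq\bigoplus_{i\leqslant0}\g(i,h_2)$, and that $F_t$ fixes $n_2$ and lies in $K\times\K^*\Id$: since $t^{-2}w+y_0\in J_1$,
\[F_t.(t^{-2}w+y_0)=w+n_2+F_t.x_0\ \xrightarrow[t\to0]{}\ w+n_2,\]
so $w+n_2\in\overline{J_1}$ for every $w\in\cc_\pp(\lf_2)^{\bullet}$. As $K_{\lf_2}$ fixes $\cc_\pp(\lf_2)$ pointwise and acts transitively on $\Od_2$, and $\overline{J_1}$ is $K$-stable, this yields $\cc_\pp(\lf_2)^{\bullet}+\Od_2\subseteq\overline{J_1}$ and hence $J_2\subseteq\overline{J_1}$. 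The delicate points to get right are the weight bookkeeping that restores $w$ while contracting $x_0$, and the stability of the Jordan class under the translation $y_0\mapsto w+y_0$.
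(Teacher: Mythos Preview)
Your overall scheme is sound: the implications $(\ref{wii})\Rightarrow(\ref{wi})$, $(\ref{wiv})\Rightarrow(\ref{wiii})$, $(\ref{wiii})\Rightarrow(\ref{wii})$ are essentially those of the paper, and your direct argument for $(\ref{wi})\Rightarrow(\ref{wii})$ via the ``bridge'' and Theorem~\ref{genpart} applied inside $(\lf_2,\kk_{\lf_2})$ is correct (and not needed in the paper, which closes the cycle through $(\ref{wi})\Rightarrow(\ref{wiv})$ instead). The substantive difference lies in $(\ref{wii})\Rightarrow(\ref{wiv})$, where you argue by explicit translation and contraction; here there is a genuine gap.

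You assert that $\g^{w+s_0}=\g^w\cap\g^{s_0}=\g^{s_0}$ for every $w\in\cc_\pp(\lf_2)^\bullet$. The clause ``because $\g^{s_0}\subseteq\lf_2$'' only justifies the second equality; the first is false in general. For commuting semisimple elements one always has $\g^{w+s_0}\supseteq\g^w\cap\g^{s_0}$, but a root $\alpha$ with $\alpha(w)=-\alpha(s_0)\neq0$ produces a strictly larger centraliser. A minimal instance occurs when $\lf_2$ is a Cartan subalgebra: then $s_0\in\cc_\pp(\lf_2)^\bullet$ as well, and $w=-s_0$ gives $\g^{w+s_0}=\g$. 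Consequently $w+y_0\in J_1$ does not hold for all $w\in\cc_\pp(\lf_2)^\bullet$, and your one-parameter family $t\mapsto t^{-2}w+y_0$ may leave $J_1$.

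The repair is easy and preserves your argument. Both $w$ and $s_0$ lie in the center of the Levi $\g^{s_0}$, and $\g^{w+s_0}=\g^{s_0}$ is equivalent to $w+s_0\in\cc_\g(\g^{s_0})^\bullet$, an open condition on $w$. For fixed $w$ this fails for only finitely many values of $t^{-2}$, so $t^{-2}w+y_0\in J_1$ for all but finitely many $t\in\K^*$; your limit $F_t.(t^{-2}w+y_0)=w+n_2+F_t.x_0\to w+n_2$ then places $w+n_2$ in $\overline{J_1}$ for every $w\in\cc_\pp(\lf_2)^\bullet$, as you wanted. The paper sidesteps this centraliser comparison entirely: it first observes $\overline{J_1}\supseteq\cc_\pp(\lf_2)+\overline{J_1'}$ with $J_1'=J_{(\lf_2',\kk_{\lf_2'})}(\lf_1\cap\lf_2',\Od_1)$, and then applies Theorem~\ref{genpart} in $(\lf_2',\kk_{\lf_2'})$ to obtain $n_2\in\overline{J_1'}$, decoupling the center direction from the nilpotent degeneration.
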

\begin{proof}
\eqref{wiv}$\Rightarrow$ \eqref{wiii} is obvious.

\eqref{wiii}$\Rightarrow$ \eqref{wii}: Choose $n_2\in \Od_2$ and embed it in a normal $\sld$-triple $\sS_2$ of $\lf_2$. Since $J_2\cap \overline{J_1}$ is $K$-stable we can choose an element $y_2$ in this intersection such that the Jordan decomposition of $y_2$ is $s_2+n_2$ with $s_2\in \cc_{\pp}(\lf_2)^{\bullet}$ and $n_2\in \Od_2$. Therefore $\g^{s_2}=\lf_2$ \eqref{cplbullet} so $y_2\in U_{\lf_2}$ and $X_{\lf_2}(\overline{J_1},\sS_2)\neq\emptyset$. Then Proposition~\ref{propopen}(i) yields $X_{\lf_2}(J_1,\sS_2)\neq\emptyset$.

\eqref{wii}$\Rightarrow$\eqref{wi}: Choose an element $y_1\in X_{\lf_2}(J_1, \sS_2)$ and consider its Jordan decomposition $s_1+n_1$. Since $y_1\in U_{\lf_2}$, the centralizer $\lf_1:=\g^{s_1}$ is a Levi subalgebra of $\lf_2$ (Lemma~\ref{ts}). Hence it makes sense to speak of the $K_{\lf_2}$-decomposition class $J_{(\lf_2,\kk_{\lf_2})}(\lf_1,\Od_1)$ with $\Od_1:=(K^{s_1})^{\circ}.n_1$. Since $\lf_1=\lf_2^{s_1}$, we derive from \eqref{caracJc} that $y_1$ belongs to this class. Since $y_1\in n_2+\pp_{\lf_2}^{m_2}$, the datum $(\lf_1,\Od_1)$ weakly induces $(\lf_2,\Od_2)$.

\eqref{wi}$\Rightarrow$\eqref{wiv}: Choose a datum $(\lf_1,K_{\lf_1}.n_1)$ defining $J_1$, which weakly induces a datum $(\lf_2,K_{\lf_2}.n_2)$ defining $J_2$. Since $\lf_1\subset\lf_2$, we have $\cc_{\pp}(\lf_1)\supset\cc_{\pp}(\lf_2)$ and 
\[\overline{J_1}=\overline{K.(\cc_{\pp}(\lf_1)^{\bullet}+n_1)}\supset \overline{K_{\lf_2}.(\cc_{\pp}(\lf_2)+\cc_{\pp_{\lf_2'}}(\lf_1)+n_1)}=\cc_{\pp}(\lf_2)+\overline{J_1'}\]
where $J_1'$ is the $K_{\lf_2'}$-decomposition class $J_{(\lf_2', \kk_{\lf'_2})}(\lf_1, K_{\lf_1}.n_1)$. 
On the other hand, we have seen in Remark \ref{rkdefind} that our hypothesis implies that $J_1'$ weakly induces $K_{\lf_2}.n_2$ in $\lf_2'$. Then it follows from Theorem \ref{genpart}, applied to the symmetric pair $(\lf_2', \kk_{\lf_2'})$, that $n_2\in\overline{J_1'}$. Hence $\overline{J_1}\supset \cc_{\pp}(\lf_2)+n_2$ and we get $J_2\subset\overline{J_1}$ by $K$-stability of $\overline{J_1}$.
\end{proof}

A key point in the proof of \eqref{wiii}$\Rightarrow$\eqref{wiv} is that whenever we have $x\in J_2\cap\overline{J_1}$ with Jordan decomposition $x=s+n$, we can manage to realize the degeneration through a one parameter family $(x_t)_{t\in \K^*}\in J_1$, $\lim_{t\rightarrow0} x_t= x$ with the $x_t$ lying in a Slodowy slice centered on $x$ and included in $\g^s$. Hence the whole degeneration takes place in the Levi $\g^s$ and thus yields a degeneration toward any element with Jordan decomposition similar to $x$.


\begin{lm}\label{dimind}
Let $J_1$, $J_2$ be $K$-decomposition classes such that $J_1\wupind J_2$. Then $J_1\upind J_2$ if and only if the dimension of $K$-orbits in $J_1$ and $J_2$ coincide.
\end{lm}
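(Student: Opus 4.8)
The plan is to reduce both the slice-induction condition and the orbit-dimension equality to one and the same numerical identity attached to the data. Fix data $(\lf_1,\Od_1)$ and $(\lf_2,\Od_2)$ with $J_i=J(\lf_i,\Od_i)$, $\lf_1\subseteq\lf_2$, realizing the weak induction $J_1\wupind J_2$. Using $\dim K.x=\dim\kk-\dim\kk^x$ together with the description \eqref{caracJc} of the classes, I first record three orbit-dimension formulas. For $y=s_2+n_2\in J_2$ with $\g^{s_2}=\lf_2$ and $n_2\in\Od_2$, one has $\kk^y=\kk_{\lf_2}^{n_2}$, hence $\dim K.y=\dim\kk-\dim\kk_{\lf_2}+\dim\Od_2$. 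Similarly, for $x=s_1+n_1\in J_1$ with $\g^{s_1}=\lf_1$, one obtains $\dim K.x=\dim\kk-\dim\kk_{\lf_1}+\dim\Od_1$. Finally, for $z=s+n'\in J_{(\lf_2,\kk_{\lf_2})}(\lf_1,\Od_1)$ the regularity condition recalled in Remark \ref{rqdec} gives $\lf_2^s=\lf_1$, so $\kk_{\lf_2}^z=\kk_{\lf_1}^{n'}$ and $\dim K_{\lf_2}.z=\dim\kk_{\lf_2}-\dim\kk_{\lf_1}+\dim\Od_1$.

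Since this last quantity does not depend on $z$ (Remark \ref{rkdefind}), the slice-induction clause ``$\dim K_{\lf_2}.z=\dim\Od_2$ for some $z$'' is equivalent to the single equality
\[\dim\kk_{\lf_2}-\dim\kk_{\lf_1}+\dim\Od_1=\dim\Od_2.\qquad(\dagger)\]
Comparing the first two formulas, the $K$-orbit dimensions in $J_1$ and $J_2$ agree exactly when $\dim\kk-\dim\kk_{\lf_1}+\dim\Od_1=\dim\kk-\dim\kk_{\lf_2}+\dim\Od_2$, which after cancelling $\dim\kk$ is precisely $(\dagger)$. Thus, for the fixed data, slice induction is equivalent to the coincidence of the $K$-orbit dimensions in $J_1$ and $J_2$.

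It remains to deal with the existential quantifier over data in Definition \ref{definduction}(\ref{Jordind}), which I handle by noting that the common $K$-orbit dimension of a decomposition class is intrinsic to the class. If the orbit dimensions of $J_1$ and $J_2$ coincide, then $(\dagger)$ holds for the weak-induction data, so these data already realize a slice induction and $J_1\upind J_2$. Conversely, if $J_1\upind J_2$ is witnessed by some possibly different data $(\lf_1',\Od_1'),(\lf_2',\Od_2')$, then $(\dagger)$ holds for the primed data, and running the computation of the previous paragraph backwards yields the equality of the $K$-orbit dimensions of $J_1$ and $J_2$, which do not depend on the chosen data.

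The one point requiring care is the bookkeeping of centralizers, specifically distinguishing the global condition $\g^{s_1}=\lf_1$ governing membership in $J_1$ from the relative condition $\lf_2^s=\lf_1$ governing membership in the intermediate class $J_{(\lf_2,\kk_{\lf_2})}(\lf_1,\Od_1)$; the computation of $\dim K_{\lf_2}.z$ must use the latter. No input beyond these dimension counts and the constancy of orbit dimension on a decomposition class is needed. One may note in passing that weak induction always forces $\dim\kk_{\lf_2}-\dim\kk_{\lf_1}+\dim\Od_1\geq\dim\Od_2$ (apply Lemma \ref{lmelimX} inside $(\lf_2,\kk_{\lf_2})$ to the cone $K_{\lf_2}.(\K^*z)$), so that slice induction is exactly the equality case.
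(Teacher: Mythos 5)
Your proof is correct. It establishes the same reduction as the paper --- namely that both the slice-induction clause and the equality of $K$-orbit dimensions amount to comparing a $K_{\lf_2}$-orbit dimension in $\pp_{\lf_2}$ with a $K$-orbit dimension in $\pp$ --- but by a different mechanism. The paper picks an element $y_1\in J_1\cap U_{\lf_2}\cap J_{(\lf_2,\kk_{\lf_2})}(\lf_1,\Od_1)$ (as in the proof of Theorem \ref{thmind}, (ii)$\Rightarrow$(i)) and an element $y_2\in\cc_{\pp}(\lf_2)^{\bullet}+n_2\subset J_2\cap U_{\lf_2}$, and then invokes Lemma \ref{ts}(iii), which transfers orbit codimension from $\pp$ to $\pp_{\lf_2}$ for elements of $U_{\lf_2}$; since $\dim K_{\lf_2}.y_2=\dim\Od_2$, the statement drops out in two lines. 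You instead bypass $U_{\lf_2}$ entirely and compute all three orbit dimensions from centralizers via the Jordan decomposition ($\kk^{s+n}=\kk^s\cap\kk^n$ and $\kk^s=\kk_{\lf}$ when $\g^s=\lf$), reducing everything to the single identity $(\dagger)$. What your route buys is self-containedness and an explicit numerical criterion for slice induction in terms of the data (and, as you note in passing, the observation that weak induction forces the inequality $\geq$ in $(\dagger)$, so that slice induction is the equality case --- a point the paper does not make); what the paper's route buys is brevity by reusing machinery already in place. Your explicit handling of the existential quantifier over data, using that the orbit dimension of a class is intrinsic, is also slightly more careful than the paper's tacit treatment. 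One small bookkeeping remark: since $K$ and $K_{\lf}$ are defined via $\kk\cap\g'$ and $\kk_{\lf}\cap\g'$, your formulas $\dim K.x=\dim\kk-\dim\kk^x$ hold because the centre of $\g$ acts trivially; this is harmless but worth a word.
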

\begin{proof}
Choose a datum $(\lf_2,\Od_2)$ defining $J_2$.  We can argue as in Theorem \ref{thmind} (ii)$\Rightarrow$(i) to find a data $(\lf_1,\Od_1)$ defining $J_1$ and an element $y_1\in J_1\cap U_{\lf_2}\cap J_{(\lf_2,\kk_{\lf_2})}(\lf_1,\Od_1)$. 
On the other hand, any element $y_2\in \cc_{\pp}(\lf_2)^\bullet+n_2$, $n_2\in \Od_2$ satisfies  $y_2\in J_2\cap U_{\lf_2}$. 
Then, it follows from Lemma \ref{ts}(iii) that $y_1$ and $y_2$ share the same $K$-orbit codimension in $\pp$ if and only if they share the same $K_{\lf_2}$-orbit codimension in $\pp_{\lf_2}$. Since $\dim K_{\lf_2}.y_2=\dim \Od_2$, the result follows.
\end{proof}

As a consequence of Theorem \ref{thmind} and Lemma \ref{dimind}, we are now able to prove the following. 

\begin{cor}\label{corind}
\begin{enumerate}[(i)]

\item If $J_0$ is a $K$-decomposition class, then 
\[\overline{J_0}=\bigcup_{J\wupind J_0} J,\qquad \overline{J_0}^{\bullet}=\bigcup_{J\upind J_0} J.\] 
\item If $J_1$ and $J_2$ are $K$-decomposition classes, $\overline{J_1}\cap \overline{J_2}$ is a union of $K$-decomposition classes.
\item Sheets are union of $K$-decomposition classes.
\item Induction (resp. weak induction) of data and of $K$-decomposition classes are transitive. That is, for classes, 
\[(J_1\upind J_2)\wedge (J_2\upind J_3)\Rightarrow  J_1\upind J_3,\]
\[\textrm{resp. }(J_1\wupind J_2)\wedge (J_2\wupind J_3)\Rightarrow  J_1\wupind J_3.\phantom{re}\]
\item \label{equivind} In the Lie algebra case, the parabolic induction coincides with the slice induction.
\end{enumerate}
\end{cor}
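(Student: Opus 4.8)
The plan is to obtain (i)--(iv) as formal consequences of Theorem \ref{thmind} and Lemma \ref{dimind}, reserving the real work for the identification with parabolic induction in (v).

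For (i) I would use the equivalence \eqref{wiii}$\Leftrightarrow$\eqref{wiv} of Theorem \ref{thmind}: given classes $J$ and $J_0$, either $J\cap\overline{J_0}=\emptyset$ or $J\subseteq\overline{J_0}$. As the $K$-decomposition classes form a finite partition of $\pp$, this shows at once that $\overline{J_0}$ is the union of the classes it contains, which by \eqref{wi}$\Leftrightarrow$\eqref{wiv} are exactly the $J$ with $J_0\wupind J$. For the regular closure I would observe that $\dim K.x$ is constant along each class and that its value on the dense class $J_0$ is the largest one occurring in $\overline{J_0}$; hence $\overline{J_0}^{\bullet}$ is the union of those $J\subseteq\overline{J_0}$ whose orbit dimension equals that of $J_0$, and Lemma \ref{dimind} says these are precisely the $J$ with $J_0\upind J$. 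Part (ii) then follows because $\overline{J_1}\cap\overline{J_2}$ is an intersection of two unions of classes, hence a union of classes; and (iii) follows from (i) since every sheet is $\overline{J_0}^{\bullet}$ for its dense class $J_0$, as recalled before the corollary.

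For (iv) I would settle the class version first. Reading weak induction as inclusion of closures via Theorem \ref{thmind}, the hypotheses $J_1\wupind J_2$ and $J_2\wupind J_3$ give $J_3\subseteq\overline{J_2}\subseteq\overline{J_1}$ (the second inclusion because $\overline{J_1}$ is closed and contains $J_2$), so $J_1\wupind J_3$. For strong induction I would add the dimension bookkeeping of Lemma \ref{dimind}: $J_1\upind J_2$ and $J_2\upind J_3$ force the three $K$-orbit dimensions to agree, so the weakly induced pair $(J_1,J_3)$ has coinciding orbit dimensions and Lemma \ref{dimind} upgrades it to $J_1\upind J_3$. The statement for data reduces to this: since induction of data is intrinsic (Remark \ref{rkdefind}), I may take the ambient algebra to be $\lf_3$ and view the $(\lf_i,\Od_i)$ through the classes $J_{(\lf_3,\kk_{\lf_3})}(\lf_i,\Od_i)$; data induction implies the corresponding class induction there, class transitivity applies, and because the top class is the orbit class of $\Od_3$ the resulting relation is exactly induction of $(\lf_1,\Od_1)$ to $(\lf_3,\Od_3)$.

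The substance is (v). Working in the Lie algebra setting (so $K=G$, $\pp=\g$) and reducing to ambient $\lf_2$ semisimple (Remark \ref{rkdefind}), suppose $(\lf_1,\Od_1)$ parabolically induces $(\lf_2,\Od_2)$. By \cite{Bo} the induced orbit $\Od_2$ is the unique nilpotent orbit of the sheet $\overline{J_{\lf_2}(\lf_1,\Od_1)}^{\bullet}$, where $J_{\lf_2}(\lf_1,\Od_1)$ is the $G$-decomposition class of $\lf_2$ attached to $(\lf_1,\Od_1)$; in particular $\Od_2\subseteq\overline{J_{\lf_2}(\lf_1,\Od_1)}$, which by Theorem \ref{genpart} is the nonvanishing of the corresponding Slodowy slice, i.e. weak slice induction. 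Next, Lusztig--Spaltenstein induction \cite{LS} preserves codimension, so $\dim\cc_{\lf_2}(n_2)=\dim\cc_{\lf_1}(n_1)$; since the generic $L_2$-orbit of $J_{\lf_2}(\lf_1,\Od_1)$ is that of an element $s+n_1$ with $\lf_2^{s}=\lf_1$ and $n_1\in\Od_1$, whose centraliser is $\cc_{\lf_1}(n_1)$, this orbit has the same dimension as $\Od_2$. Lemma \ref{dimind} then promotes weak induction to $(\lf_1,\Od_1)\upind(\lf_2,\Od_2)$. Conversely, if $(\lf_1,\Od_1)\upind(\lf_2,\Od_2)$ then $\Od_2$ is a nilpotent orbit inside $\overline{J_{\lf_2}(\lf_1,\Od_1)}$ of dimension equal to the generic one, hence $\Od_2$ lies in the sheet $\overline{J_{\lf_2}(\lf_1,\Od_1)}^{\bullet}$, and the classical uniqueness of the nilpotent orbit in a sheet \cite{BK,Bo} identifies it with the parabolically induced orbit. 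The main obstacle is precisely this step: matching the orbit-dimension count of Lemma \ref{dimind} with the codimension-preservation of parabolic induction, and invoking the Lie-algebra-specific uniqueness of the nilpotent orbit in a sheet to conclude that both inductions yield the same orbit.
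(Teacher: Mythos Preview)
Your argument for (i)--(iv) is correct and matches the paper's proof essentially line for line: (i) from Theorem~\ref{thmind} (equivalence \eqref{wi}$\Leftrightarrow$\eqref{wiii}$\Leftrightarrow$\eqref{wiv}) together with Lemma~\ref{dimind}, then (ii), (iii), and the class version of (iv) as formal consequences, and finally the data version of (iv) by reducing to ambient algebra $\lf_3$. One cosmetic point on (iv) for data: when you recover data induction from the class relation $J_1\upind J_3$, the datum on the $J_1$ side is a priori only $K$-conjugate to $(\lf_1,\Od_1)$; the paper makes this explicit and then observes that conjugating by an element of $K$ preserves the induction condition, which is what your phrase ``the resulting relation is exactly induction of $(\lf_1,\Od_1)$ to $(\lf_3,\Od_3)$'' implicitly uses.

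For (v) you take a genuinely different, more hands-on route than the paper. The paper simply invokes \cite[3.5--3.6]{Bo}, which already says that $\overline{J_0}^{\bullet}$ is the union of the $G$-decomposition classes parabolically induced from $J_0$; comparing this with (i) immediately gives the equivalence of the two inductions at the level of classes, and the passage to data is handled exactly as in (iv). You instead argue each direction separately: parabolic $\Rightarrow$ slice via $\Od_2\subset\overline{J_{\lf_2}(\lf_1,\Od_1)}$ plus codimension preservation \cite{LS} and Lemma~\ref{dimind}; slice $\Rightarrow$ parabolic via the uniqueness of the nilpotent orbit in $\overline{J_0}^{\bullet}$. This is correct (modulo a harmless terminological slip: $\overline{J_{\lf_2}(\lf_1,\Od_1)}^{\bullet}$ need not itself be a sheet, only contained in one, but the uniqueness of its nilpotent orbit still holds by \cite{BK,Bo}). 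Your approach has the merit of exposing the mechanism---codimension preservation on one side, uniqueness of the nilpotent orbit on the other---whereas the paper's one-line appeal to Borho is more economical but hides precisely this.
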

\begin{proof}
Since $K$-decomposition classes form a partition of $\pp$, the first part of (i) is an immediate consequence of the equivalence \eqref{wi}$\Leftrightarrow$ \eqref{wiii}$\Leftrightarrow$ \eqref{wiv} in Theorem \ref{thmind}. The second part of (i) follows from the first one and Lemma \ref{dimind}. Then we deduce from (i) the statement (iv) concerning $K$-decomposition classes. 

Let us now show (iv) on data. Choose three data $(\lf_i,\Od_i)$, $i=1,2,3$ such that $(\lf_i,\Od_i)$ (weakly) induces $(\lf_{i+1}, \Od_{i+1})$, $i=1,2$. Without loss of generality, we can assume that $\g=\lf_3$ (see Remark \ref{rkdefind}). Then $J(\lf_i, \Od_i)$ (weakly) induces $J(\lf_{i+1}, \Od_{i+1})$ for $i=1,2$, so $J(\lf_1, \Od_1)$ (weakly) induces $J(\lf_3, \Od_3)$. Hence there exist data $(\widetilde{\lf_i}, \widetilde{\Od_i})$, $K$-conjugate to $(\lf_i, \Od_i)$ ($i\in \{1,3\})$, and such that $(\widetilde{\lf_1}, \widetilde{\Od_1})$ (weakly) induces $(\widetilde{\lf_3},\widetilde{\Od_3})$. Since $\lf_3=\g$, we have $(\widetilde{\lf_3}, \widetilde{\Od_3})=(\lf_3, \Od_3)$ so, up to $K$-conjugacy, we can assume that $(\widetilde{\lf_1}, \widetilde{\Od_1})=(\lf_1, \Od_1)$.

In order to show (ii), we only need to note that $\overline{J_1}\cap \overline{J_2}$ is the union of $K$-decomposition classes induced both by $J_1$ and $J_2$. Statement (iii) follows from the fact that any sheet is the regular closure of a $K$-decomposition class.

In \cite[3.5-3.6]{Bo} it is shown that, whenever $J_0$ is a $G$-decomposition class, $\overline{J_0}^{\bullet}$ is the set of $G$-decomposition classes parabolically induced by $J_0$. Then \eqref{equivind} for $G$-decomposition classes follows from (i) and the discussion in Remark \ref{rqdec}. We now prove \eqref{equivind} for data, using arguments similar to the proof of (iv) for data. Without loss of generality, we can restrict to the cases where a datum $(\lf_1,\Od_1)$ slice (resp. parabolically) induces a datum of the form $(\g,\Od_2)$. Since $(\g,\Od_2)$ is the only datum defining $J(\g,\Od_2)$, such a slice (resp. parabolic) induction is equivalent to the corresponding induction from $J(\lf_1,\Od_1)$ to $J(\g,\Od_2)$. The result on data then follows from the result on decomposition classes.
\end{proof}

An important consequence of Corollary \ref{corind} \eqref{equivind} is that the slice induction can be seen as a generalization of the parabolic induction, fitting to the symmetric Lie algebra setting needs. Note in particular that (i), (ii), (iv) and (iii) are respective analogues of \cite[3.5, 3.8, 2.3]{Bo} and \cite[5.8.d]{BK}.

The fact that a closure of a decomposition class is a union of $K$-decomposition classes has also been shown in \cite{Le} when the ground field is precisely $\C$.

\begin{ex}Using the results of Section \ref{slosli} and of this section, we can reconsider the example given in \cite[\S14(5)]{Bu2} (where there is a mistake in the definition of $\lf$). Here $(\g,\kk)=(\mathfrak{sp}_6,\gl_3)$, the sheet $S_G$ of $\g$ is defined by $S_G=\overline{J_{\g}(\lf,0)}^{\bullet}\subset\g^{(16)}$ with $\lf$ arising from $\pp$ of type $\widetilde A_1$ (long root). We know that $J_{\g}(\lf,0)\cap\pp$ is the single $K$-decomposition class $J_0:=J_{(\g,\kk)}(\lf,0)$ \cite[Theorem~7.8]{Bu2}. In particular, $S_G\cap\pp$ has only one irreducible component of maximal dimension $8+\dim \cc_{\pp}(\lf)=10$, namely $\overline{J_0}^{\bullet}$ \cite[Lemma~8.2]{Bu2}. Any nilpotent element $e_0\in \pp^{(8)}\subset\g^{(16)}$ embedded in a normal $\sld$-triple $\sS_0$ satisfies $e_0\in \overline{J_0}^{\bullet}$ if and only if $\dim X(\overline{J_0}^{\bullet}, \sS_0)=\dim J_0-8=2$ (see Theorem \ref{genpart} and Proposition \ref{propsmooth}). 
We choose the two nilpotent elements $e,e'\in \pp^{(8)}$ as in \cite[\S14(5)]{Bu2} and embed them in $\sld$-triples $\sS,\sS'$. It turns out that the Slodowy slices $X(S_G\cap\pp,\sS)$ and $X(S_G\cap\pp, \sS')$ are irreducible varieties of respective dimension $1$ and $2$. In particular $J_0\not\upind K.e$ while $J_0\upind K.e'$. An other consequence is that $S_G\cap\pp$ has at least two irreducible components, $\overline{J_0}^{\bullet}$ of dimension $10$ and $\overline{K.(e+X(S_G\cap\pp, \sS))}^{\bullet}$ of dimension $1+8=9$, see Proposition \ref{deformation1}. This is a counterexample to the equidimensionality of $S\cap\pp$ for $S$ a sheet of $\g$, this property being observed when $\g=\gl_n$ \cite[Theorem~13.2]{Bu2}
\end{ex}

\section{Sheets of $\g$ in type G$_2$}
\label{secG2}
In this section, we assume that $\g$ is a simple Lie algebra of type G$_2$.  We are interested in studying non-trivial (i.e. non-regular and not restricted to one nilpotent orbit) sheets in this case. We adopt conventions and notations of \cite{FH}. In particular, we fix a Cartan subalgebra $\h$ of $\g$. In the corresponding root system, we fix a basis $\{\alpha_1,\alpha_2\}$, with $\alpha_1$ a short root,  and label the associated positive roots as pictured

%
%

\tikzset{triplearrow/.style={
  thick,
  double distance=4pt, 
  -{stealth}}, 
thirdline/.style={
thick, -{stealth} }
}

\begin{center}
\scalebox{0.9}{\begin{tikzpicture}[>=stealth]
\draw[->] (0,0) -- (1,0) node[below]{\small{$\alpha_1$}};
\draw[->] (0,0) -- (-3/2,0.866) node[left]{\small{$\alpha_2$}};
\draw[->] (0,0) -- (-1/2,0.866) node[left]{\small{$\alpha_3$}};
\draw[->] (0,0) -- (1/2,0.866) node[right]{\small{$\alpha_4$}};
\draw[->] (0,0) -- (3/2,0.866) node[right]{\small{$\alpha_5$}};
\draw[->] (0,0) -- (0,1.732) node[left]{\small{$\alpha_6$}};
\draw[->] (0,0) -- (-1,0);
\draw[->] (0,0) -- (3/2,-0.866);
\draw[->] (0,0) -- (1/2,-0.866);
\draw[->] (0,0) -- (-1/2,-0.866);
\draw[->] (0,0) -- (-3/2,-0.866);
\draw[->] (0,0) -- (0,-1.732);



\end{tikzpicture}}

%
\end{center}

We choose, as a Chevalley basis,  the one given in \cite[p.346]{FH} and denote it by $h_1$, $h_2$, $x_i (i \in[\![1,6]\!])$, $y_i (i\in[\![1,6]\!])$, with $x_i\in \g_{\alpha_i}$, $y_i\in \g_{-\alpha_i}$ and $(x_i,h_i,y_i)$ an \Striplet for $i=1,2$.

The nilpotent cone of $\g$ consists of five nilpotent conjugacy classes. We denote the orbit of dimension $2i$ ($i\in\{0,3,4,5,6\}$) by  $\Omega_{2i}$.
Moreover, we choose some particular representatives $n_{i}$ (two of them when $i=5$) of these orbits as given in column 2 of Table~ \ref{representants}.

\begin{table}[ht]
\centering
$\begin{array}{|c| c| c| }
\hline
G\textrm{-orbit }\Omega& \textrm{Representative}& K\textrm{-orbit }\Od\\
\hline
\Omega_0& 0&\Od_0\\
\hline
\Omega_6 & n_3:=x_5 & \Od_3\\
\hline
\Omega_8& n_4:=x_4& \Od_4\\
\hline
\multirow{2}{*}{$\Omega_{10}$} & n_{5a}:=x_5+y_3& \Od_{5a}\\
\cline{2-3}
& n_{5b}:=x_2+x_5&\Od_{5b}\\
\hline
\Omega_{12} & n_{6}:=x_3+y_2& \Od_{6}\\
\hline
\end{array} $
\caption{Representatives for $\Omega_i$ and $\Od_i$}
\label{representants}
\end{table}


Since $\g$ is of rank two, the only non-regular non-nilpotent elements of $\g$ are semisimple subregular elements. This gives rise to two subregular sheets of $\g$ which can be described as union of $G$-decomposition classes as follows.
\[
 S^{\g}_1:=G.(\K^* h_1) \sqcup \Omega_{10}, \qquad  S^{\g}_2:=G. (\K^* h_2) \sqcup \Omega_{10}.
\]
This follows from the fact that $G.(\K^* h_1)$ and $G.(\K^* h_2)$ are of the same dimension (here, $11$) and that both must contain a nilpotent orbit in their regular closure (Proposition \ref{deformation0}). A practical criterion to distinguish generic elements of $S^{\g}_1$ from those of $S^{\g}_2$ is that elements of the former lie in a Levi of type $A_1$ (short root) while their centraliser is a Levi of type $\widetilde{A_1}$ (long root).

It is known \cite{Sl, Pe, BK, Hi} that $S^{\g}_1$ is smooth at points of $G.(\K^* h_1)$ and is singular at points of $\Omega_{10}$, undergoing a threefold covering in a resolution of singularities,  see \emph{e.g.} Proposition \ref{detg}. On the other hand, $S^{\g}_2$ is a smooth variety.

Letting \[\kk:=\h\oplus\bigoplus_{i\in \{1,6\}}\g_{\pm\alpha_i}, \qquad \pp:=\bigoplus_{i\in \{2,3,4,5\}}\g_{\pm \alpha_i},\]
we construct a symmetric Lie algebra of type G$_{2(2)}$. It corresponds to the single non-compact form for an algebra of type G$_2$ and we have $\kk\cong \sln_2\oplus\sln_2$.
 
Let us describe the $K$-orbits of interest in this setting. First, the symmetric Lie algebra is of maximal rank, hence any $G$-orbit intersects $\pp$ \cite{An}. Since semisimple orbits intersect $\pp$ into single orbits (see \emph{e.g.}  \cite[Proposition~6.6]{Bu2}), there are exactly two subregular semisimple $K$-decomposition classes: $J_i:=K.(\K^*\tilde h_i)$, with $\tilde h_i$ an element of $G.h_i\cap \pp$, $i=1,2$. It turns out \cite{Dj1} that $\Omega_{2i}\cap\pp$ is a single $K$-orbit $\Od_i$ for $i\in\{0,3,4,6\}$ and is the union of two $K$-orbits $\Od_{5a}$ and $\Od_{5b}$ in the subregular case $i=5$ (respectively numbered by $3$ and $4$ in \cite{Dj1}). 
We refer to Table \ref{representants} for representatives of these different $K$-orbits.

Note that, since sheets of $\pp$ are union of $K$-decomposition classes, there are two 6-dimensional sheets of subregular elements. These are $S_1:=\overline{J_1}^{\bullet}$ and $S_2:=\overline{J_2}^{\bullet}$. Even if each of these sheets must contain at least a nilpotent orbit among $\Od_{5a}$ and $\Od_{5b}$ (Proposition \ref{deformation0}), it is a non-trivial  problem to decide which orbits belong to a given sheet. At this point, we may even not rule out the existence of a possible $5$-dimensional subregular sheet consisting of a single orbit $\Od_{5a}$ or $\Od_{5b}$.

\subsection{Applications of the Slodowy slice theory}
\label{G2slsl}
We now apply partly the general theory of Sections \ref{slosli} and \ref{sliceind} to our special case. The main result of this subsection is the following
\begin{prop}\label{descsheetG2}
\begin{enumerate}[(i)]
\item  The only sheets of subregular elements in $\pp$ are $S_1$ and $S_2$.
\item The decomposition of each sheet $S_i$ ($i=1,2$)  as union of $K$-decomposition classes is \[S_i=J_i\sqcup \Od_{5a}\sqcup \Od_{5b}\quad(=S_i^{\g}\cap \pp).\]
\item The sheet $S_1$ is smooth on $J_1$ and $\Od_{5a}$. At points of $\Od_{5b}$, the singularities of $S_1$ are smoothly equivalent to the intersection of three lines.
\item The sheet $S_2$ is smooth. 
\end{enumerate}
\end{prop}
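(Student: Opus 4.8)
The plan is to prove Proposition \ref{descsheetG2} by systematically computing the Slodowy slices $X(S_i\cap\pp,\sS)$ at each of the two subregular nilpotent orbits $\Od_{5a}$ and $\Od_{5b}$, using the machinery assembled in Sections \ref{slosli} and \ref{sliceind}. The guiding principle, made rigorous by Remark \ref{smoothequiv}, is that all of the geometry we need (dimension, irreducible components, smoothness, and the local structure of singularities) is faithfully reflected in these slices. First I would fix normal $\sld$-triples $\sS_{5a}=(n_{5a},h_{5a},f_{5a})$ and $\sS_{5b}=(n_{5b},h_{5b},f_{5b})$ refining the representatives of Table \ref{representants}, then explicitly parameterize the affine slices $n_{5a}+\pp^{f_{5a}}$ and $n_{5b}+\pp^{f_{5b}}$ using the Chevalley basis. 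The centralizers $\pp^{f}$ are computed from the characteristic grading, so each slice is a small-dimensional affine space in which concrete coordinates are available.

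The heart of the argument is the identification, for each nilpotent orbit $\Od_{5\ast}$, of the intersection of the slice with $S_i\cap\pp$. By Theorem \ref{genpart}, the orbit $\Od_{5\ast}$ lies in $\overline{J_i}$ precisely when $X(S_i\cap\pp,\sS_{5\ast})\neq\emptyset$, and, by Proposition \ref{deformation0}, each sheet $S_i=\overline{J_i}^{\bullet}$ must contain at least one of the two nilpotent orbits; so the first task is to determine, orbit by orbit, membership via nonemptiness of the relevant slice. To pin down $S_i\cap\pp$ I would describe $J_i=K.(\K^*\tilde h_i)$ near the slice by taking the Zariski closure: an element $z$ of the slice lies in $\overline{J_i}^{\bullet}$ exactly when its semisimple part is $K$-conjugate into $\K^*\tilde h_i$ and its $K$-orbit has the subregular dimension. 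Concretely, I would compute the semisimple/nilpotent Jordan decomposition of a generic point $n_{5\ast}+x$ of the slice, evaluate the relevant Levi type (type $A_1$ short versus $\widetilde A_1$ long, as distinguished in Section \ref{secG2}) of its semisimple part, and impose the orbit-dimension condition $\dim K.(n_{5\ast}+x)=6$. Proposition \ref{propsmooth}(iii) guarantees $X(S_i\cap\pp,\sS_{5\ast})$ is pure of the expected codimension, and Proposition \ref{propsmooth}(v) makes $S_i\cap\pp$ near $\Od_{5\ast}$ smoothly equivalent to the slice, so the singularity analysis reduces to reading off the local structure of the slice equations.

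With these computations in hand, the four assertions follow. For (ii), the membership computations should show $\Od_{5a},\Od_{5b}\subset\overline{J_i}$ for $i=1,2$, and combined with the fact that $S_i^{\g}\cap\pp$ contains exactly $J_i\sqcup\Od_{5a}\sqcup\Od_{5b}$ (using $S_i=\overline{J_i}^{\bullet}$ and Corollary \ref{corind}(iii) that sheets are unions of $K$-decomposition classes), we get the stated decomposition and the equality $S_i=S_i^{\g}\cap\pp$. For (i), once we know both nilpotent orbits lie in both $\overline{J_i}$, no further $5$-dimensional sheet consisting of a single orbit can arise, since such an orbit would already be absorbed into $S_1$ or $S_2$; here I would invoke Proposition \ref{deformation1} to see that the $U_i$ cover and that no orbit is left over. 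For (iii) and (iv), the smoothness of $S_2$ and of $S_1$ along $J_1$ and $\Od_{5a}$ follows from the slices being smooth (locally an affine space or a smooth subvariety), while the singularity of $S_1$ at $\Od_{5b}$ is exhibited by showing $X(S_1\cap\pp,\sS_{5b})$ is, up to smooth equivalence, a union of three lines through the origin.

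The main obstacle I expect is the singularity analysis at $\Od_{5b}$ for $S_1$: proving that the slice $X(S_1\cap\pp,\sS_{5b})$ is exactly three lines (and not, say, a cuspidal or otherwise degenerate configuration) requires carefully solving the polynomial equations cutting out $\overline{J_1}^{\bullet}$ inside the affine slice and verifying the three analytic branches meet transversally enough to be ``smoothly equivalent to the intersection of three lines.'' This is the symmetric-space shadow of the known threefold covering phenomenon for $S_1^{\g}$ recalled in Section \ref{secG2}, so I would cross-check the computation against the $G_2$ Lie algebra picture: the three lines in $\pp$ should be traceable to the three germs of $S_1^{\g}$ meeting along $\Omega_{10}$, intersected with $\pp$. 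A secondary difficulty is bookkeeping the distinction between the two nilpotent $K$-orbits $\Od_{5a}$ and $\Od_{5b}$ inside the single $G$-orbit $\Omega_{10}$, and ensuring the normal $\sld$-triples and their $\pp^f$-slices are computed consistently with the grading; this is routine but error-prone, and the definitions in \cite{Dj1} and Table \ref{representants} must be matched precisely.
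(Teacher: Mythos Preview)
Your proposal is correct and follows essentially the same route as the paper: reduce everything to the Slodowy slices at $\Od_{5a}$ and $\Od_{5b}$ via Proposition \ref{propsmooth}, Theorem \ref{genpart}, and Remark \ref{smoothequiv}, then read off membership, dimension, and local structure from the slice.

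One computational simplification in the paper is worth noting. Rather than computing the Jordan decomposition of a generic slice element and testing the Levi type of its semisimple part, the paper first determines the entire subregular locus $X(\pp^{(5)},\sS_{5\ast})$ at once by observing (via the contracting $(F_t)$-action of Lemma \ref{lmelimX}) that every point of the slice $n_{5\ast}+\pp^{f_{5\ast}}$ has orbit dimension $\geqslant 5$, so ``subregular'' on the slice is simply ``not regular'', which is cut out by rank conditions (minors of the bracket map). This immediately yields a finite union of lines through $n_{5\ast}$; only then does one check, on a single point of each line, whether it lies in $J_1$ or $J_2$. This avoids handling Jordan decompositions in families and makes the three-line picture at $\Od_{5b}$ for $S_1$ (and the single lines elsewhere) drop out directly, so the obstacle you flag in your last paragraph essentially disappears.
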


Recall that the only decomposition classes of subregular elements are $J_1$, $J_2$, $\Od_{5a}$ and $\Od_{5b}$. So, since sheets are regular closures of decomposition classes, (ii) implies (i).

The statements (ii), (iii) and (iv) rely on computations on the Slodowy slices. In fact, embedding $n_{5a}$ and $n_{5b}$ into respective normal $\sld$-triple, $\sS_{5a}$ and $\sS_{5b}$, we claim that 
\begin{lm} \label{eXG2}
\begin{enumerate}[(i)]
\item $X(\pp^{(5)},\sS_{5a})$ is the union of two lines $\K t_i$ ($i=1,2$), with $n_{5a}+t_1\in J_1$ and $n_{5a}+t_2\in J_2$.
\item $X(\pp^{(5)},\sS_{5b})$ is the union of four lines $\K t_i$ ($i=3,4,5,6$), with $n_{5b}+t_i\in J_1$ for $i\in \{3,4,5\}$ and $n_{5b}+t_6\in J_2$.
\end{enumerate}
\end{lm}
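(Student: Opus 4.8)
The plan is to make Lemma~\ref{eXG2} a fully explicit computation in the $14$-dimensional Lie algebra $\g$ of type G$_2$, using the Chevalley basis $h_1,h_2,x_i,y_i$ fixed above. The two claimed statements are structurally identical, so I describe the method for $\sS_{5a}$ (the case of $n_{5a}=x_5+y_3$) and then for $\sS_{5b}$ ($n_{5b}=x_2+x_5$); in each case the point is to (a) produce the normal $\sld$-triple, (b) compute $\pp^f$ explicitly, (c) identify the subset $X(\pp^{(5)},\sS)=(n+\pp^f)\cap U\cap\pp^{(5)}$ by imposing the subregularity condition, and (d) decide, for each resulting one-parameter family, whether its generic member lies in $J_1$ or $J_2$.

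First I would fix, for $e=n_{5a}$, a normal $\sld$-triple $\sS_{5a}=(e,h,f)$ with $e,f\in\pp$ and $h\in\kk$. Since $\Omega_{10}$ is the subregular nilpotent orbit this is routine: one finds $h\in\h\subset\kk$ and $f\in\pp$ from the standard structure constants, checking that the $\Z/2\Z$-grading is respected (this is exactly the \emph{normal} requirement). Next I would compute the centraliser $\g^f$ and intersect with $\pp$ to get $\pp^f=\cc_{\pp}(f)$. Because $\Omega_{10}$ has dimension $10$, we have $\dim\g^f=\dim\g-10=4$, and by \eqref{egalfond} its odd part $\pp^f$ is $2$-dimensional; so $n_{5a}+\pp^f$ is an affine plane. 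The slice $X(\pp^{(5)},\sS_{5a})$ is the locus inside this plane consisting of subregular elements (those $x$ with $\dim K.x=5$, equivalently $\dim\g^x=\dim\g-10$) that moreover lie in $U_{\g}=\g$ here since $\lf=\g$. Writing a general element $n_{5a}+a\,u_1+b\,u_2$ for a basis $u_1,u_2$ of $\pp^f$, I would compute the characteristic polynomial of $\operatorname{ad}$ (or directly the rank of $\operatorname{ad}(n_{5a}+au_1+bu_2)$) as a polynomial in $a,b$; the subregularity condition cuts out an algebraic curve in the $(a,b)$-plane, which the lemma asserts is a union of two lines through the origin. By Lemma~\ref{lmelimX} these lines do pass through $e$, so the homogeneity (cone) structure is automatic and I expect the curve to be defined by a homogeneous equation, factoring into linear forms over $\K$.

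Having the two lines $\K t_1,\K t_2$, the remaining task is to assign each to $J_1$ or $J_2$. The clean criterion is stated in the text: generic elements of $S_1^{\g}$ lie in a Levi of type $A_1$ (short root) and generic elements of $S_2^{\g}$ lie in a Levi of type $\widetilde{A_1}$ (long root). Equivalently, for a subregular element $z$ the semisimple part $s$ of its Jordan decomposition has $\g^s$ a Levi of type $A_1$ or $\widetilde{A_1}$, distinguished e.g. by whether the nonzero root of $\g^s$ is short or long; this can be read off from the eigenvalues of $\operatorname{ad}(z)$ or from the Killing-length of $s$. So for each representative $n_{5a}+t_i$ I would compute its semisimple part and test which Levi it generates, thereby matching it with $J_1$ or $J_2$. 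The same program, carried out for $n_{5b}=x_2+x_5$, yields a $2$-dimensional $\pp^f$ whose subregular locus is cut out by a homogeneous quartic factoring into four lines $\K t_3,\dots,\K t_6$; the assignment of three of them to $J_1$ and one to $J_2$ is again decided by the short/long Levi test.

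The \textbf{main obstacle} I anticipate is purely computational bookkeeping rather than conceptual: one must pin down the Chevalley structure constants of G$_2$ precisely enough (signs included) to write $f$ and $\pp^f$ correctly, and then factor the subregularity polynomial honestly over $\K$. There is a subtlety worth flagging: the slice must be intersected with $U_{\lf}$, but since $\lf=\g$ here we have $U_{\g}=\g$ and this condition is vacuous, so the only real constraint is subregularity. A secondary point is that the two nilpotent orbits $\Od_{5a},\Od_{5b}$ differ in the symmetric setting (both lying in $\Omega_{10}\cap\pp$), so I would double-check that the chosen $f$ for each is genuinely normal and that the resulting slices are transverse; but the counting $2+4$ of lines is exactly what one predicts from the threefold-cover picture of $S_1^{\g}$ intersected with $\pp$, so the geometric consistency check (three of the lines for $\Od_{5b}$ attaching to $S_1$) serves as a good sanity test on the calculation.
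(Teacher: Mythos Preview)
Your plan matches the paper's approach essentially verbatim: compute the normal $\sld$-triples explicitly, write down $\pp^f$, impose the rank-drop condition on $\ad$ to cut out the subregular locus, and then sort the resulting lines into $J_1$ versus $J_2$ by the short/long root Levi criterion. The paper carries this out by hand (and checks with GAP), presenting the explicit $\sS_{5a}$, $\sS_{5b}$ and the vectors $t_1,\dots,t_6$ in a table.

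Two small points deserve correction or emphasis. First, your dimension count for $\pp^f$ is off: from \eqref{egalfond} one has $\dim\kk-\dim\kk^f=\dim\pp-\dim\pp^f$, and together with $\dim\g^f=4$, $\dim\kk=6$, $\dim\pp=8$ this gives $\dim\pp^f=3$, not $2$. So the slice $n_{5x}+\pp^f$ is a $3$-dimensional affine space, and the subregular locus is a curve in this $3$-space (for $\sS_{5b}$ the four $t_i$ genuinely span a $3$-dimensional subspace, satisfying the single relation $t_3-t_4-t_5+3t_6=0$). Second, the paper exploits a simplification you only half-state: because of the contracting $(F_t)$-action of Lemma~\ref{lmelimX}, every element of $e+\pp^f$ has $K$-orbit dimension at least $\dim K.e=5$, so on the slice \emph{subregular is equivalent to not regular}. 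This lets one write the subregular locus directly as the vanishing locus of the maximal minors of $\ad(n_{5x}+\sum a_iu_i)$, avoiding any need to compute orbit dimensions exactly.
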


In particular, $X(J_i,\sS_{5x})\neq\emptyset$ so $J_i$ slice induces $\Od_{5x}$ for $i\in \{1,2\}$ and $x\in \{a,b\}$. Hence Proposition \ref{descsheetG2} (ii) follows from Corollary \ref{corind} (or, in a more simple way, from Lemma \ref{lmelimX}).
 
The other consequence of Lemma \ref{eXG2} is that $J_i\sqcup \Od_{5a}$ ($i=1,2$)  (resp. $J_2\sqcup \Od_{5b}$) is smoothly equivalent to the (smooth) affine line $X(S_i, \sS_{5a})$ (resp. $X(S_2, \sS_{5b})$), see Remark \ref{smoothequiv}. On the other hand $J_1\sqcup \Od_{5b}$ is smoothly equivalement to a union of three lines meeting at a point. This explains (iii) and (iv) of  Proposition \ref{descsheetG2}.

To sum up, the picture is the following. Only one branch in the neighborhood of the singularities of $S_1^{\g}$ at points of $\Od_{5a}$  is preserved under the intersection with $\pp$. On the contrary, the singularities at points of $\Od_{5b}$ are ``intact'' when intersected with $\pp$.

\medskip

The idea of the proof of Lemma~\ref{eXG2} is the following. Due to Lemma \ref{Ft}, an element in the Slodowy slice is subregular if and only if it is not regular. In particular, we get explicit equations of the subregular locus in the Slodowy slice via minors of some matrices. Then, these equations are simple enough to solve. Most of the computations have been made by hand and then checked by \cite{GAP} using W.~de~Graaf's package \cite{SLA}\footnote{These GAP computations are available at \url{https://hal.archives-ouvertes.fr/hal-01017691v2/file/gap_g2_v6.txt}.}. We give the explicit outcome of the computation in terms of our Chevalley basis.  Set $m_{5a}:=x_3+y_5$ and $m_{5b}:=-\frac 43 y_2+\frac 23 y_3+\frac 23 y_4+\frac 43 y_5$. Then $\sS_{5a}:=(n_{5a}, -2h_2, m_{5a})$ and $\sS_{5b}:=(n_{5b},2h_1+4h_2 ,m_{5b})$ are normal $\sld$-triples and 
the elements $t_i$ of the lemma can be chosen as follows.
\begin{table}[ht]
\centering
$\begin{array}{|c| c|}
\hline
t_1&x_3+9y_5\\
\hline
t_2 & x_3+y_5\\
\hline
\end{array}
\qquad \qquad
\begin{array}{|c|c|}
\hline
t_3 & y_3+y_4\\
\hline
t_4&6y_2-3y_3+y_4\\
\hline
t_5& y_3-3y_4-6y_5\\
\hline
t_6&2y_2-y_3-y_4-2y_5\\
\hline
\end{array} $\qquad
\label{ti}
\end{table}


\begin{Rqs}
\begin{enumerate}[-]
\item Note that, thanks to \eqref{Ft}, our method also provides explicit one-parameter degenerations $\lim_{\gamma\rightarrow 0}n_{5x}+\gamma t_i= n_{5x}.$ ($x\in \{a,b\}$)

\item In the real case, we have no reason to think that decomposition classes closure inclusions always leaves footprints on the Slodowy slice level (see proof of the crucial statement Proposition \ref{propsmooth}(iii)). However, it is an interesting feature to note that our method may still provide real degeneration in some cases. Indeed, here, all the $t_i$ belong to the obvious real form of $\g$.

\item The computations presented in this section can be improved in order to get similar results in higher rank. For example, in a work in progress, the first author has found two singular sheets in the (ordinary) Lie algebra of type F$_4$, the others being smooth. The singular sheets are those appearing in line 9 and 10 in the last table of \cite{Br1}.
\end{enumerate}
\end{Rqs}

%



\subsection{$4$-ality and projections}
\label{G24ality}
We now provide an explicit desingularization of $S_1$ through the well-known projection $\so_7\twoheadrightarrow \g$. We first note that it would be hard to follow Broer's approach \cite{Br1,Br2} in the Lie algebra case for normalization and desingularization since it uses fiber bundles of the form $G\times^PY$ with $P$ a parabolic subgroup of $G$.

In order to achieve our goal, we use an other description of our algebra $\g$ of type G$_2$ using $4$-ality as defined in \cite[\S3.4]{LM}. Namely, we choose four copies $C_1,C_2,C_3, D$ of a $2$-dimensional space equipped with a non-degenerate bilinear skew-symmetric form $\omega$. Then
\[\g_0:=\sln(C_1)\times \sln(C_2)\times\sln(C_3)\times\sln(D)\oplus(C_1\otimes C_2\otimes C_3\otimes D) 
\]
can be equipped with a Lie bracket in such a way that $\g_0\cong \so_8$, see \eqref{so8} for an identification. In this model, $\kk_0:=\sln(C_1)\times \sln(C_2)\times\sln(C_3)\times\sln(D)$ is a Lie subalgebra of $\g_0$ and acts on $\pp_0:=C_1\otimes C_2\otimes C_3 \otimes D$ in the usual way. We refer to \cite{LM} for the definition of the bracket of two elements of $\pp_0$ using $\omega$ which, for instance, identifies $C_1$ with its dual and hence $\sld(C_1)$ with $S^2C_1$.

This presentation of $\so_8$ relies on the $\mathop{\mathfrak{S}}\nolimits_4$-symmetry of $\so_8$ as can be seen on the extended Dynkin diagram of type $D_4$. We are interested in the $\mathop{\mathfrak{S}}\nolimits_2$ (resp. $\mathop{\mathfrak{S}}\nolimits_3$)-action on $\g_0$ induced by permutations on the $2$ spaces $C_1$ and $C_2$ (resp. on the $3$ spaces $C_i$). Its fixed point space $\g_1\subset\g_0$ (resp. $\g_2\subset\g_1$) is isomorphic to $\so_7$ (resp. $\g$) and can be described as 
\[\g_i=\kk_i\oplus \pp_i \textrm{ with } \left\{\begin{array}{l l} \kk_1:=\sln(C')\times\sln(C_3)\times \sln(D)& \pp_1:=S^2 C'\otimes C_3\otimes D \\ \kk_2:=\sln(C)\times \sln(D)&  \pp_2:=S^3C\otimes D\end{array}\right.,\] where $C$ and $C'$ are again copies of our two-dimensional space. 

Choosing a Cartan subalgebra of $\g_2$ in $\sln(C)\times\sln(D)$ and weight vectors $c_{-},c_{+}$ (resp. $d_{-},d_+$) in $C$ (resp. $D$), we rediscover the combinatorics of the root spaces in $\g_2$ as pictured below.
\begin{center}
\begin{tikzpicture}[>=stealth]
\tikzstyle{every node}=[scale=0.7]
\draw[->] (0,0) -- (1,0) node[right]{$\;\;\;\;\sln(C)$};
\draw[->] (0,0) -- (-3/2,0.866) node[left]{{$c_{-}^3\otimes d_+$}};
\draw[->] (0,0) -- (-1/2,0.866) node[above]{{$c_{-}^2c_+\otimes d_+$}\;\;\;\;\;\;\;\;};
\draw[->] (0,0) -- (1/2,0.866) node[above]{\;\;\;\;\;\;\;\;{$c_{-}c_+^2\otimes d_+$}};
\draw[->] (0,0) -- (3/2,0.866) node[right]{{$c_{+}^3\otimes d_+$}};
\draw[->] (0,0) -- (0,1.732) node[above]{$\sln(D)$};
\draw[->] (0,0) -- (-1,0);
\draw[->] (0,0) -- (3/2,-0.866) node[right]{{$c_{+}^3\otimes d_{-}$}};
\draw[->] (0,0) -- (1/2,-0.866) node[below]{\;\;\;\;\;\;\;\;{$c_{-}c_+^2\otimes d_{-}$}};
\draw[->] (0,0) -- (-1/2,-0.866) node[below]{{$c_{-}^2c_+\otimes d_{-}$}\;\;\;\;\;\;\;\;};
\draw[->] (0,0) -- (-3/2,-0.866) node[left]{{$c_{-}^3\otimes d_{-}$}};
\draw[->] (0,0) -- (0,-1.732); 
\draw (0,0) ellipse (8ex and 1ex);
\draw (0,0) ellipse (1ex and 12ex);
\end{tikzpicture}\end{center}


Moreover, we have a natural projection $\pi:\g_1\twoheadrightarrow \g_2$, given for instance on $\pp_1$ via $\pi(xy\otimes z\otimes t) =xyz\otimes t$.
%
Its kernel is a $\kk_2=\sln(C)\times \sln(D)$-module so 
$\pi$ is $SL(C)\times SL(D)$-equivariant.

We also introduce faithful representations that allow us to manipulate more easily these algebras.
Let $V_8:=C_1\otimes C_2\oplus C_3\otimes D$ with action of $\pp_0$ given by
\[(c_1\otimes c_2\otimes c_3\otimes d).(c_1'\otimes c_2'+c_3'\otimes d)=\omega (c_1,c_1')\omega(c_2,c_2')c_3\otimes d+\omega (c_3,c_3')\omega(d,d')c_1\otimes c_2\]
and standard action of $\kk_0$. It is one of the three inequivalent fundamental representations of $\g_0\cong \so_8$ of dimension $8$ and $\g_0\subseteq\gl(V_8)$ is the subalgebra preserving the symmetric form \begin{equation*}\label{so8}\omega(c_1\otimes c_2+c_3\otimes d, c_1'\otimes c_2'+c_3'\otimes d')=\omega(c_1,c_1')\omega(c_2,c_2')-\omega(c_3,c_3')\omega(d,d').\end{equation*}
As an $\sln(C)\times \sln(D)$-module, $V_8$ decomposes as $V_1\stackrel{\perp}{\oplus} V_a\stackrel{\perp}{\oplus} V_b$ where $V_1=\Lambda^2C'$, $V_a=S^2C'$ and $V_b=C_3\otimes D$. Setting $V_7:=V_a\oplus V_b$, one gets that $\g_1$ is the subalgebra of $\so_8$ stabilizing both subspaces $V_1$ and $V_7$ while $\kk_1$ is the subalgebra of $\g_1$ stabilizing both subspaces $V_a$ and $V_b$. In particular, we recover that $\g_1\cong \so_7\oplus\so_1\cong\so_7$ and that $(\g_1,\kk_1)$ is a symmetric pair isomorphic to $(\so_7,\so_4\oplus\so_3)$ \cite{PT}. 


We denote by $\Omega'_{10}$ the $SO_7$-orbit of dimension $10$ in $\g_1$. Its Young diagram is {\tiny \yng(3,1,1,1,1)}. 
Moreover, we let $\mathcal T$ be the set  elements of $\g_1$ of rank at most $2$ with respect to the representation on $V_7$. The closed set $\mathcal T$ is nothing but the union $SO_7.(\K^*t)\sqcup\overline{ \Omega_{10}'}$ where $t$ is any semisimple element whose centraliser is a Levi of type B$_2$. In particular $\mathcal T^{\bullet}$ is a sheet in $\g_1$.

One can then extract the following information from \cite{LSm, Kr, Hi}.

\begin{prop}\label{detg}\begin{enumerate}[(i)]
\item $\pi$ induces finite surjective maps $\overline{\Omega'_{10}}\rightarrow \overline{\Omega_{10}}$, $\mathcal T\rightarrow \overline{S^{\g}_1}$ and a desingularization $\mathcal T\setminus \{0\}\rightarrow \overline{S^{\g}_1}\setminus\{0\}$.  
\item The cardinality of fibers is given by the following table:
\[\begin{array}{|c| c| c| c| c|c| }
\hline
y\in & G_2.(\K^*h_1) & \Omega_{10} & \Omega_8& \Omega_6 & 0\\
\hline
\# \;\pi^{-1}(y)\cap \mathcal T=&1 & 3& 2&1&1\\
\hline
\end{array} \]
\end{enumerate}
\end{prop}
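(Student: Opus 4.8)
\textbf{Proof plan for Proposition \ref{detg}.}

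The plan is to transport known results about the projection $\so_7 \twoheadrightarrow \g$ from the group-theoretic literature (\cite{LSm, Kr, Hi}) into our concrete $4$-ality model, and then verify the claimed fiber cardinalities by direct computation in the faithful representation $V_7$. First I would establish the finiteness and surjectivity of the three maps in (i). Surjectivity of $\pi|_{\overline{\Omega'_{10}}} \colon \overline{\Omega'_{10}} \to \overline{\Omega_{10}}$ and of $\pi|_{\mathcal T} \colon \mathcal T \to \overline{S^{\g}_1}$ follows because $\pi$ is $SL(C)\times SL(D)$-equivariant and dominant, so it maps each $SO_7$-invariant closed subset onto a $G$-invariant closed subset of the same dimension; one checks $\dim \overline{\Omega'_{10}} = \dim \overline{\Omega_{10}} = 10$ and $\dim \mathcal T = \dim \overline{S^{\g}_1} = 11$ using that $\mathcal T^\bullet$ is a sheet in $\g_1$ of the expected rank. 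Finiteness then reduces to checking that the fibers are finite, which I would verify via the equations defining $\mathcal T$ as the rank-$\leqslant 2$ locus on $V_7$: a point $y\in \overline{S^{\g}_1}$ has only finitely many preimages because the rank condition, together with the constraint $\pi(x)=y$, cuts out a zero-dimensional scheme.

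The desingularization statement is the heart of (i). The cone $\mathcal T$ is a determinantal-type variety (elements of rank $\leqslant 2$ on $V_7$), and away from the origin its singularities are controlled by the rank stratification. Following the guidelines of \cite{Hi}, I would argue that $\mathcal T \setminus \{0\}$ is smooth: the only potential singularities of $\overline{S^{\g}_1}$ occur along $\Omega_{10}$ (where three analytic branches meet, as recalled in the introduction), and the threefold covering $\pi^{-1}(\Omega_{10})\cap \mathcal T$ separates these three branches, resolving the singularity. Concretely, the smoothness of $\mathcal T\setminus\{0\}$ should follow from the fact that on the rank-exactly-$2$ locus the defining minors have maximal-rank Jacobian, while the subregular nilpotent orbit $\Omega'_{10}$ in $\so_7$ is itself smooth and its closure in $\mathcal T$ meets the generic semisimple part transversally.

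For part (ii), the fiber cardinalities, I would compute $\#(\pi^{-1}(y)\cap\mathcal T)$ orbit by orbit using representatives. The generic (subregular semisimple) case $y\in G_2.(\K^*h_1)$ giving $1$ and the nilpotent case $\Omega_{10}$ giving $3$ are exactly the numbers consistent with the threefold covering of the resolution; these match \cite[\S2]{Hi} for the Lie algebra G$_2$ computation and can be double-checked on explicit representatives via the Chevalley basis of Section \ref{secG2} together with the projection formula $\pi(xy\otimes z \otimes t)=xyz\otimes t$. The remaining values ($2$ over $\Omega_8$, $1$ over $\Omega_6$, $1$ over $0$) I would obtain by solving $\pi(x)=n_i$ for $x\in\mathcal T$ with $n_i$ the representatives of Table \ref{representants}, counting solutions modulo the stabilizer.

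The main obstacle I anticipate is the desingularization claim in (i): verifying smoothness of $\mathcal T\setminus\{0\}$ and showing that $\pi$ genuinely separates the three colliding branches of $\overline{S^{\g}_1}$ over $\Omega_{10}$. This is precisely the point where the geometry is delicate, since it is the phenomenon responsible for the singularity of $S^{\g}_1$ in the first place; the fiber count of $3$ over $\Omega_{10}$ in (ii) is the numerical shadow of exactly this branch-separation, so the two parts are logically intertwined. Establishing it rigorously likely requires either a careful local computation in coordinates adapted to $V_7$ or an appeal to the explicit resolution already constructed in \cite{Hi}, checking that our $4$-ality model reproduces it.
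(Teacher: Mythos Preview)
The paper does not prove Proposition~\ref{detg}. The sentence introducing it reads ``One can then extract the following information from \cite{LSm, Kr, Hi},'' and no argument follows; the proposition is quoted as known background for the genuinely new result, Proposition~\ref{desing}. So there is no proof in the paper against which to compare your plan.

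That said, your sketch has two soft spots worth flagging. First, the equivariance step: from ``$\pi$ is $SL(C)\times SL(D)$-equivariant'' you cannot conclude that $SO_7$-invariant closed sets map to $G$-invariant closed sets, since $SL(C)\times SL(D)$ is only $K$, not all of $G_2$. What you actually need is that $\pi$ is $G_2$-equivariant. This holds because, as a $\g_2$-module, $\g_1\cong\so_7$ decomposes as $\g_2\oplus V_7$, so the $7$-dimensional kernel of $\pi$ is forced to be the unique $\g_2$-stable complement; hence $\pi$ intertwines the adjoint $G_2$-actions. Second, your smoothness argument for $\mathcal T\setminus\{0\}$ is vaguer than necessary. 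Under the identification $\g_1\cong\Lambda^2 V_7$, the rank-$\leqslant 2$ locus $\mathcal T$ is exactly the set of decomposable bivectors $v\wedge w$, i.e.\ the affine cone over the Pl\"ucker-embedded $Gr(2,V_7)$; since $GL(V_7)$ acts transitively on nonzero decomposables, $\mathcal T\setminus\{0\}$ is a single orbit and therefore smooth. (The paper uses the analogous transitivity argument for $\mathcal T_{\pp_1}\setminus\{0\}$ in its proof of Proposition~\ref{desing}.) With these two repairs your outline is sound, though the detailed fibre counts over $\Omega_{10}$, $\Omega_8$, $\Omega_6$ are precisely what the cited references already provide.
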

%
%

We refer to \cite{Oh} for the classification of nilpotent orbits in $\pp_1$. 
In $\g_1$, the $SO_7$-orbit $\Omega_{10}'$ splits into two $SO_3\times SO_4$-orbits $\Od_{5a}'$ and $ \Od_{5b}'$ whose respective $ab$-diagrams are {\scriptsize \young(aba,a,b,b,b)} and {\scriptsize \young(bab,a,a,b,b)}. We also define $\mathcal T_{\pp_1}:=\mathcal T\cap \pp_1$.
We can then state the following result
\begin{prop}\label{desing}
\begin{enumerate}[(i)]
\item $\pi$ induces finite surjective maps $\overline{\Od'_{5x}}\rightarrow \overline{\Od_{5x}}$ ($x\in \{a,b\}$),  $\mathcal T_{\pp_1}\rightarrow \overline{S_1}$ and a desingularization $\mathcal T_{\pp_1}\setminus\{0\}\rightarrow \overline{S_1}\setminus \{0\}$
\item The cardinality of fibers is given by:
\[\begin{array}{|c| c| c| c| c|c| c|}
\hline
y\in & J_1 & \Od_{5a} &\Od_{5b}& \Od_4& \Od_3 & 0\\
\hline
\# \pi^{-1}(y)\cap \mathcal T_{\pp_1}=&1 &1& 3 & 2&1&1\\
\hline
\end{array} \]
\end{enumerate}
\end{prop}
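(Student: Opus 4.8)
The plan is to leverage the known Lie-algebra-case result, Proposition \ref{detg}, and transport it to the symmetric setting by exploiting the $SL(C)\times SL(D)$-equivariance of the projection $\pi:\g_1\twoheadrightarrow\g_2$. The key observation is that $\kk_2=\sln(C)\times\sln(D)$ is precisely the subalgebra fixing the $\mathop{\mathfrak S}\nolimits_2\times\mathop{\mathfrak S}\nolimits_3$-action, so the $\pp$-parts of the relevant varieties are exactly the fixed loci intersected with $\pp$. Concretely, I would first establish that $\pi$ restricts to a morphism $\mathcal T_{\pp_1}=\mathcal T\cap\pp_1\to\overline{S_1}$, using that $\pi$ is $K_2$-equivariant (since $K=K_2$ acts on both $\pp_1$ and $\pp_2$) and that $\overline{S_1}=\overline{S_1^\g}\cap\pp$ by Proposition \ref{descsheetG2}(ii). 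Surjectivity onto $\overline{S_1}$ then follows from surjectivity of $\mathcal T\to\overline{S_1^\g}$ in Proposition \ref{detg}(i) together with the fact that every point of $\overline{S_1}\subset\pp_2$ has a preimage in $\mathcal T$, which must lie in $\pp_1$ by equivariance under the relevant involution.

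Next I would address finiteness and the desingularization claim. Finiteness of $\mathcal T_{\pp_1}\to\overline{S_1}$ is inherited from finiteness of $\mathcal T\to\overline{S_1^\g}$, since a restriction of a finite morphism to closed subvarieties (pulled back along the inclusion $\overline{S_1}\hookrightarrow\overline{S_1^\g}$) remains finite. For the desingularization statement on $\mathcal T_{\pp_1}\setminus\{0\}\to\overline{S_1}\setminus\{0\}$, I would argue that $\mathcal T_{\pp_1}\setminus\{0\}$ is smooth—this should follow either from the smoothness of $\mathcal T\setminus\{0\}$ combined with transversality of the fixed-point locus, or more robustly by identifying $\mathcal T_{\pp_1}$ with a sheet (or union of orbits) in the symmetric pair $(\g_1,\kk_1)\cong(\so_7,\so_4\oplus\so_3)$ and invoking the smoothness results of Section \ref{slosli} via Slodowy slices. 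The map being finite, surjective, and an isomorphism over the smooth open dense part $J_1$ (where the fiber has cardinality one, see the table) yields the birational property needed for a desingularization.

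The heart of the proof—and what I expect to be the main obstacle—is the fiber-cardinality table in part (ii), particularly distinguishing the behavior over $\Od_{5a}$ (fiber of size $1$) from that over $\Od_{5b}$ (fiber of size $3$). The $SO_7$-orbit $\Omega_{10}'$ splits into the two $SO_3\times SO_4$-orbits $\Od_{5a}'$ and $\Od_{5b}'$ with $ab$-diagrams {\scriptsize \young(aba,a,b,b,b)} and {\scriptsize \young(bab,a,a,b,b)}, and the content of Proposition \ref{desing}(i) is that $\pi$ maps $\overline{\Od'_{5x}}$ finitely and surjectively onto $\overline{\Od_{5x}}$ for each $x$. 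The fiber counts should be extracted by intersecting the known $\g$-fiber data of Proposition \ref{detg}(ii) with the fixed-point locus: the threefold point of $\Omega_{10}$ in $\g$ must redistribute among the two $K$-orbits according to which $ab$-diagram each of the three $\so_7$-branches carries. I would verify, orbit by orbit, that exactly one of the three preimages over a point of $\Omega_{10}$ sits in $\overline{\Od'_{5a}}\cap\pp_1$ while the remaining three (the full threefold covering) survive over $\Od_{5b}$, matching the singularity structure found in Proposition \ref{descsheetG2}(iii) where $S_1$ is smooth along $\Od_{5a}$ but has a triple-line singularity along $\Od_{5b}$.

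The fibers over $\Od_3$, $\Od_4$, and $0$ should follow more directly, by restricting the corresponding $\g$-entries ($1$, $2$, $1$ respectively from the $\Omega_6$, $\Omega_8$, $0$ columns) and checking these nilpotent orbits do not split further upon intersecting with $\pp$. The compatibility of the two tables—each symmetric fiber count being at most the ambient one, with equality forced precisely where the orbit does not split—provides a useful consistency check and essentially reduces the remaining verification to the orbit-splitting analysis drawn from \cite{Oh, Dj1}. I would carry out the detailed fiber computation last, after the structural claims (morphism, finiteness, surjectivity, smoothness away from $0$) are in place, since those reduce the problem to a purely local count of branches at each nilpotent stratum.
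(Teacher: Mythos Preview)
Your strategy is different from the paper's, but it has a real gap: the smoothness of $\mathcal T_{\pp_1}\setminus\{0\}$, on which everything else in your outline hinges. Neither of your proposed arguments for it stands on its own---transversality of the $(-1)$-eigenspace with $\mathcal T$ is not automatic and you give no reason for it, and Section~\ref{slosli} does not assert that sheets are smooth, it only reduces the question to Slodowy slices (which you would then have to compute). There is also an error in your surjectivity step: compatibility of $\pi$ with the involutions does \emph{not} force a preimage of $y\in\pp_2$ to lie in $\pp_1$; it only yields an involution on the finite set $\pi^{-1}(y)\cap\mathcal T$ whose fixed locus is $\pi^{-1}(y)\cap\mathcal T_{\pp_1}$, and this can a priori be empty when the ambient fiber is even (as over $\Od_4\subset\Omega_8$, where parity gives only $0$ or $2$). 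Your branch-counting idea for $\Od_{5a}$ versus $\Od_{5b}$ via Proposition~\ref{descsheetG2}(iii) is sound in principle, but only once the source is known to be normal and irreducible---again the smoothness gap.

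The paper's route sidesteps all of this with one concrete identification you are missing: under $\g_1\cong\Lambda^2 V_7$ one has $\pp_1=V_a\wedge V_b$, so $\mathcal T_{\pp_1}=\{v_a\wedge v_b\mid v_a\in V_a,\,v_b\in V_b\}$ is a single $\GL(V_a)\times \GL(V_b)$-orbit away from~$0$, hence smooth and irreducible for free. Equivalently, $\mathcal T_{\pp_1}$ is the set of pure tensors in $\pp_1=S^2C'\otimes C_3\otimes D$, and the fiber table is then obtained by direct computation: for one representative $y$ of each stratum, list every factorization of $y\in S^3C\otimes D$ as $xyz\otimes t$ with $xy\in S^2C'$ and $z\in C_3$. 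That explicit list also settles $\pi(\overline{\Od'_{5x}})=\overline{\Od_{5x}}$ by reading off the $ab$-diagram of one such preimage---another step your outline defers without a method.
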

\begin{proof}
(ii) An element of $\pp_1$ acts on $V_8$ in the following way:
\[\left(\begin{array}{@{}c@{\;} |c | c}\scriptstyle{0}& \scriptstyle{0}&\scriptstyle{0}\\[-3pt] \hline
\scriptstyle{0}& 0 & p\\ \hline \scriptstyle{0}& p^{\vee} &\phantom{a} 0 \rule{0pt}{15pt}\phantom{a}\end{array}\right)
\begin{array}{l}\scriptscriptstyle{V_1}\\V_a\\V_b\rule{0pt}{15pt} \end{array}\]
where $p\in \Hom(V_b,V_a)$ and $p^{\vee}$ is the dual of $p$ with respect to $\omega_{|V_a\oplus V_b}$. We are interested into rank $2$ elements of this form and these are exactly elements for which $p$ is of rank $1$. We make use of the $4$-ality setting to describe such elements. An element of $\pp_1=(S^2C')\otimes (C_3 \otimes D)$ induces a rank one element of $Hom(S^2C',C_3\otimes D)$ (and, equivalently, of $Hom(C_3\otimes D, S^2C')$) if and only if it is a pure tensor of the form $(\alpha c_{+}^2+\beta c_+c_-+\gamma c_-^2)\otimes (c_+\otimes d_1 +c_-\otimes d_2)$
with $\alpha,\beta,\gamma\in \K$ and $d_1,d_2\in D$.  

After computation, one gets Table \ref{fibers}
where the column $I$ presents the $K$-decomposition class of $\overline{S_1}$ under consideration. 
Up to easy $SL(C)\times SL(D)$-conjugation, one can turn our representatives of Table \ref{representants} to the element $y\in \pp_2$ in column $II$. Finally, column $III$ present all elements in the fiber $\pi^{-1}(y)\cap \mathcal T_{\pp_1}$.

\begin{table}[ht]
\centering
$\begin{array}{|c| c| c| c| c|c| c| }
\hline
I& II& III\\
\hline
J_1=K.(\K^*(x_4+y_4)) &c_+^2c_-\otimes d_++c_+c_-^2\otimes d_-&c_+c_-\otimes(c_+\otimes d_++c_{-}\otimes d_-)\\
\hline
\Od_{5a} & c_+^3\otimes d_++c_+^2c_-\otimes d_-&c_+^2\otimes(c_+\otimes d_++c_-\otimes d_-)\\
\hline
\multirow{3}{*}{$\Od_{5b}$}&\multirow{3}{*}{$c_+^2c_-\otimes d_++c_+c_{-}^2\otimes d_+$}& c_+c_-\otimes(c_++c_-)\otimes d_+\\
\cline{4-4}
&& c_+(c_++c_-)\otimes c_-\otimes d_+\\
\cline{4-4}
&& (c_++c_-)c_-\otimes c_+\otimes d_+\\
\hline
\multirow{3}[-3]{*}{$\Od_4$}  & \multirow{3}[-3]{*}{$c_+^2c_-\otimes d_+$}& c_{+}^2\otimes c_-\otimes d_+\\
\cline{4-4}
&&c_+c_-\otimes c_+\otimes d_+\\
\hline
\Od_3 &c_+^3\otimes d_+&c_+^2\otimes c_+\otimes d_+\\
\hline
0& 0&0\\

\hline
\end{array} $
\caption{fibers of the form $\pi^{-1}(y)\cap \mathcal T_{\pp_1}$}
\label{fibers}

\end{table}

%
(i) The desingularization property follows from $\# \pi^{-1}(y)=1$ for $y\in J_1$ and the fact that $\mathcal T_{\pp_1}\setminus \{0\}$ is a smooth variety. Let us check this last point. Under the natural isomorphism $\g_1\cong \Lambda^2 V_7$, we have $\pp_1=V_a\wedge V_b$ and $\mathcal T_{\pp_1}=\{v_a\wedge v_b| v_a\in V_a,v_b\in V_b\}$. The smoothness is then a consequence of the transitivity of the action of $GL(V_a)\times GL(V_b)$ on $\mathcal T_{\pp_1}\setminus \{0\}$.

In view of Proposition \ref{detg}, there only remains to see that $\pi(\overline{\Od'_{5x}})=\overline{\Od_{5x}}$ for $x\in \{a,b\}$. 
For instance, since $c_+,c_-,d_+,d_-$ are isotropic vectors with respect to $\omega$, we can check that the pre-image $z:=c_+^2\otimes (c_+\otimes d_++c_-\otimes d_-)$ of $n_{5a}$ sends $c_-^2\in V_a$ on a non-zero multiple of $c_+\otimes d_++c_-\otimes d_-\in V_b$ and this last one is sent on a non-zero multiple of $c_+^2\in V_a$. Hence the $ab$-diagram of $z$ is {\scriptsize \young(aba,a,b,b,b)} and $z\in \Od_{5a}'$.

\end{proof}

\end{document}